\newcommand{\be}[1]{\begin{equation}\label{#1}}
\newcommand{\ee}{\end{equation}}
\renewcommand{\(}{\left(}
\renewcommand{\)}{\right)}
\newcommand{\R}{{\mathbb R}}
\newcommand{\N}{{\mathbb N}}
\newcommand{\ird}[1]{\int_{\R^d}{#1}\,dx}
\newcommand{\iri}[1]{\int_{\R^d}{#1}\,d\mu}
\newcommand{\M}[1]{\mathsf M_{#1}}
\renewcommand{\H}[1]{\mathsf H_{#1}}
\newcommand{\nc}{\normalcolor}
\newcommand{\finprf}{\unskip\null\hfill$\square$\vskip 0.3cm}
\renewcommand{\(}{\left(}
\renewcommand{\)}{\right)}
\renewcommand{\email}[1]{{\small E-mail:{\textsf{#1}}}}
\newcommand{\iomuone}[1]{\int_{\R^{d_1}} #1\,d\gamma_1}
\newcommand{\iomutwo}[1]{\int_{\R^{d_2}} #1\,d\gamma_2}
\newcommand{\iomuonetwo}[1]{\iint_{\R^{d_1}\times\R^{d_2}} #1\,d\gamma_1\otimes\gamma_2}
\newcommand{\nx}{\nabla_x}
\newcommand{\idg}[1]{\int_{\R^d}{#1}\,d\gamma}
\newcommand{\idgb}[1]{\int_{\Omega}{#1}\,d\gamma}
\newcommand{\potential}{\psi}
\newcommand{\nnrm}[2]{\left\|{#1}\right\|_{\mathrm L^{#2}(\R^d\times\R^d)}}
\newcommand{\nrmg}[2]{\left\|{#1}\right\|_{\mathrm L^{#2}(\R^d,d\gamma)}}
\newcommand{\nnrmu}[2]{\left\|{#1}\right\|_{\mathrm L^{#2}(\R^d\times\R^d,d\mu)}}
\newcommand{\iirdmu}[1]{\iint_{\R^d\times\R^d}{#1}\,d\mu}
\newcommand{\munu}{\nu}
\begin{document}

\markboth{J.~Dolbeault \& X.~Li}{$\varphi$-entropies for Fokker-Planck and kinetic Fokker-Planck equations}

\title{$\varphi$-entropies: convexity, coercivity and hypocoercivity\\
for Fokker-Planck and kinetic Fokker-Planck equations}

\author{Jean DOLBEAULT}

\address{CEREMADE (CNRS UMR n$^\circ$ 7534),\\
PSL research university, Universit\'e Paris-Dauphine,\\
Place de Lattre de Tassigny, 75775 Paris 16, France\\
\email{ dolbeaul@ceremade.dauphine.fr}}

\author{Xingyu LI}

\address{CEREMADE (CNRS UMR n$^\circ$ 7534),\\
PSL research university, Universit\'e Paris-Dauphine,\\
Place de Lattre de Tassigny, 75775 Paris 16, France\\
\email{ li@ceremade.dauphine.fr}}

\maketitle
\thispagestyle{empty}

\date{\today}\begin{center}\small\emph{\today}\end{center}

\begin{abstract} This paper is devoted to $\varphi$-entropies applied to Fokker-Planck and kinetic Fokker-Planck equations in the whole space, with confinement. The so-called $\varphi$-entropies are Lyapunov functionals which typically interpolate between Gibbs entropies and $\mathrm L^2$ estimates. We review some of their properties in the case of diffusion equations of Fokker-Planck type, give new and simplified proofs, and then adapt these methods to a kinetic Fokker-Planck equation acting on a phase space with positions and velocities. At kinetic level, since the diffusion only acts on the velocity variable, the transport operator plays an essential role in the relaxation process. Here we adopt the $\mathrm H^1$ point of view and establish a sharp decay rate. Rather than giving general but quantitatively vague estimates, our goal here is to consider simple cases, benchmark available methods and obtain sharp estimates on a key example. Some $\varphi$-entropies give rise to improved entropy -- entropy production inequalities and, as a consequence, to faster decay rates for entropy estimates of solutions to non-degenerate diffusion equations. We prove that faster entropy decay also holds at kinetic level away from equilibrium and that optimal decay rates are achieved only in asymptotic regimes. \end{abstract}

\keywords{Hypocoercivity; linear kinetic equations; Fokker-Planck operator; transport operator; diffusion limit; confinement; spectral gap; Poincar\'e inequality.
}

\ccode{AMS Subject Classification (2010): 82C40; 76P05, 35K65, 35H10, 35P15, 35Q83, 35Q84.}
\section{Introduction}\label{Sec:Intro}

By definition, the $\varphi$\emph{-entropy} of a nonnegative function $w\in\mathrm L^1(\R^d,d\gamma)$ is the functional
\[
\mathcal E[w]:=\idg{\varphi(w)}\,,
\]
where $\varphi$ is a nonnegative convex continuous function on $\R^+$ such that $\varphi(1)=0$ and $1/\varphi''$ is concave on $(0,+\infty)$, \emph{i.e.},
\be{Ass:Convexity}
\varphi''\ge0\,,\quad\varphi\ge\varphi(1)=0\quad\mbox{and}\quad(1/\varphi'')''\le0\,.
\ee
Notice that the last condition means $2\,(\varphi''')^2\le\varphi''\,\varphi^{(iv)}$ a.e. A classical example of such a function $\varphi$ is given by
\[
\varphi_p(w):=\tfrac1{p-1}\,\big(w^p-1-p\,(w-1)\big)\quad p\in(1,2]
\]
where, in the case $p=2$, $\varphi_2(w)=(w-1)^2$ and the limit case as $p\to1_+$ is given by the standard Gibbs entropy
\[
\varphi_1(w):=w\,\log w-(w-1)\,.
\]
Many results corresponding to the case $p=2$ can be obtained, \emph{e.g.}, by spectral methods. The case $p=1$ is important in probability theory and statistical physics. Our goal is to emphasize that they share properties which can be put in a common framework. Throughout this paper we shall assume that $d\gamma$ is a nonnegative bounded measure, which is absolutely continuous with respect to Lebesgue's measure and write
\[
d\gamma=e^{-\potential}\,dx
\]
where $\potential$ is a \emph{potential} such that $e^{-\potential}$ is in $\mathrm L^1(\R^d,dx)$. Up to the addition of a constant to $\potential$, we can assume without loss of generality that $d\gamma$ is a probability measure. A review of the \emph{main properties of $\varphi$-entropies}, new and simplified proofs and key references are given in Section~\ref{Sec:PhiEntropy}.

\medskip Without entering the technical details, let us illustrate the use of the $\varphi$-entropy in the case of diffusion equations. A typical application of the $\varphi$-entropy is the control of the rate of relaxation of the solution to the \emph{Ornstein-Uhlenbeck equation}
\be{Eqn:OU}
\frac{\partial w}{\partial t}=\mathsf L\,w:=\Delta w-\nabla\potential\cdot\nabla w\,,
\ee
which is also known as the \emph{backward Kolmogorov equation}. If we solve the equation with a nonnegative initial datum $w_0$ such that $\idg{w_0}=1$, then the solution satisfies $\idg{w(t,\cdot)}=1$ for any $t>0$ and $\lim_{t\to+\infty}w(t,\cdot)=1$. The Ornstein-Uhlenbeck operator $\mathsf L$ defined on $\mathrm L^2(\R^d,d\gamma)$ is indeed self-adjoint and such that
\[
-\idg{(\mathsf L\,w_1)\,w_2}=\idg{\nabla w_1\cdot\nabla w_2}\quad\forall\,w_1,\,w_2\in\mathrm H^1(\R^d,d\gamma)\,.
\]
As a consequence, it is also straightforward to observe that for any solution $w$ with initial datum $w_0$ such that $\mathcal E[w_0]$ is finite, then
\[
\frac d{dt}\mathcal E[w]=-\idg{\varphi''(w)\,|\nx w|^2}=:-\,\mathcal I[w]\,,
\]
where $\mathcal I[w]$ denotes the $\varphi$\emph{-Fisher information} functional. If for some $\Lambda>0$ we can establish the \emph{entropy -- entropy production} inequality
\be{Ineq:E-EP}
\mathcal I[w]\ge\Lambda\,\mathcal E[w]\quad\forall\,w\in\mathrm H^1(\R^d,d\gamma)\,,
\ee
then we deduce that
\[
\mathcal E[w(t,\cdot)]\le\mathcal E[w_0]\,e^{-\Lambda\,t}\quad\forall\,t\ge0\,,
\]
which controls the convergence of $w$ to $1$ as $t\to+\infty$, for instance in $\mathrm L^p(\R^d,d\gamma)$ by a generalized \emph{Csisz\'ar-Kullback inequality} if $\varphi=\varphi_p$, $1\le p\le2$. The entropy -- entropy production inequality is the Poincar\'e inequality associated with $d\gamma$ if $\varphi=\varphi_2$, and the logarithmic Sobolev inequality if $\varphi=\varphi_1$.

We recall that the study of~\eqref{Eqn:OU} is equivalent to the study of the \emph{Fokker-Planck equation}
\be{Eqn:FP}
\frac{\partial u}{\partial t}=\Delta u+\nx\cdot(u\,\nx \potential)\,.
\ee
A nonnegative solution with initial datum $u_0\in\mathrm L^1(\R^d,dx)$ and $\ird{u_0}=M>0$ has constant mass $M=\ird{u(t,\cdot)}$ for any $t>0$, and converges towards the unique stationary solution
\[
u_\star=M\,\frac{e^{-\potential}}{\ird{e^{-\potential}}}\,.
\]
Without loss of generality, we shall assume that $M=1$. Then one observes that $w=u/u_\star$ solves~\eqref{Eqn:OU}, which allows to control the rate of convergence of $u$ to~$u_\star$. A list of results concerning the solutions of~\eqref{Eqn:OU} and~\eqref{Eqn:FP} is also collected in Section~\ref{Sec:PhiEntropy}.

\medskip The third section of this paper is devoted to the extension of $\varphi$-entropy methods to kinetic equations. Section~\ref{Sec:kFPsharp} of this paper deals with the \emph{kinetic Fokker-Planck equation}, or \emph{Vlasov-Fokker-Planck equation}, that can be written as
\be{VFP1}
\frac{\partial f}{\partial t}+v\cdot\nabla_x f-\nabla_x\potential\cdot\nabla_v f=\Delta_v f+\nabla_v\cdot\(v\,f\)\,.
\ee
Our basic example corresponds to the case of the harmonic potential $\potential(x)=|x|^2/2$. Unless it is explicitly specified, we will only consider this case. Notice that this problem has an explicit Green function whose expression can be found in~\cite{MR2042214}.

Since~\eqref{VFP1} is linear, we can assume at no cost that $\nnrm f1=1$ and consider the stationary solution
\[
f_\star(x,v)=(2\,\pi)^{-d}\,e^{-\potential(x)}\,e^{-\tfrac12|v|^2}=(2\,\pi)^{-d}\,e^{-\tfrac12(|x|^2+|v|^2)}\quad\forall\,(x,v)\in\R^d\times\R^d\,.
\]
The function
\[
g:=\frac f{f_\star}
\]
solves
\be{VFP2}
\frac{\partial g}{\partial t}+\mathsf Tg=\mathsf L\,g
\ee
where the transport operator $\mathsf T$ and the Ornstein-Uhlenbeck operator $\mathsf L$ are defined respectively by
\[
\mathsf Tg:=v\cdot\nabla_x g-x\cdot\nabla_v g\quad\mbox{and}\quad\mathsf L\,g:=\Delta_v g-v\cdot\nabla_v g\,.
\]
Let $d\mu:=f_\star\,dx\,dv$ be the invariant measure on the phase space $\R^d\times\R^d$, so that $\mathsf T$ and $\mathsf L$ are respectively anti-self-adjoint and self-adjoint. The function
\[
h:=g^{p/2}
\]
solves
\be{VFP3}
\frac{\partial h}{\partial t}+\mathsf Th=\mathsf L\,h+\frac{2-p}p\,\frac{|\nabla_v h|^2}h\,.
\ee
At the kinetic level, we consider the $\varphi$-entropy given by
\[
\mathcal E[g]:=\iirdmu{\varphi(g)}\,.
\]
With this notation, $\mathcal E[g]=\iirdmu{\varphi\(f/f_\star\)}$ so that, with $f=g\,f_\star=h^{2/p}\,f_\star$ we have
\[
\mathcal E[g]=\iirdmu{h^2\,\log\(\frac{h^2}{\iirdmu{h^2}}\)}\quad\mbox{if}\quad\varphi=\varphi_1\,,
\]
\begin{multline*}
\mathcal E[g]=\mathcal E[h^{2/p}]=\frac1{p-1}\left[\iirdmu{h^2}-\(\iirdmu{h^{2/p}}\)^{p/2}\right]\\
\mbox{if}\quad\varphi=\varphi_p\,,\;p\in(1,2]\,.
\end{multline*}
The optimal rate of decay of $\mathcal E[g]$ has been established by A.~Arnold and J.~Erb in~\cite{2014arXiv1409.5425A}. In the special case of a harmonic potential, their result goes as follows.
\begin{proposition}\label{Prop:AE} Assume that $\potential(x)=|x|^2/2$ for any $x\in\R^d$. Take $\varphi=\varphi_p$ for some $p\in[1,2]$. To any nonnegative solution $g\in\mathrm L^1(\R^d\times\R^d)$ of~\eqref{VFP2} with initial datum $g$ such that $\mathcal E[g_0]<\infty$, we can associate a constant $\mathcal C>0$ for which
\be{ExpDecayVFP}
\mathcal E[g(t,\cdot,\cdot)]\le\mathcal C\,e^{-t}\quad\forall\,t\ge0\,.
\ee
Moreover the rate $e^{-t}$ is sharp as $t\to+\infty$.\end{proposition}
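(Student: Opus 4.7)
The starting point is that entropy dissipation only controls the velocity gradient: differentiating $\mathcal E[g]=\iirdmu{\varphi(g)}$ along~\eqref{VFP2} and using that $\mathsf T$ is antisymmetric and $\mathsf L$ self-adjoint on $\mathrm L^2(d\mu)$ with $-\iirdmu{g\,\mathsf L g}=\iirdmu{|\nabla_v g|^2}$, one finds
\[
\frac d{dt}\,\mathcal E[g]=-\iirdmu{\varphi''(g)\,|\nabla_v g|^2}\,,
\]
which does not see $\nabla_x g$. Since $d\mu$ is Gaussian, no Poincar\'e-type inequality can bound $\mathcal E[g]$ by this half-Fisher information alone, so hypocoercivity is required.

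Following the $\mathrm H^1$ strategy referred to in the abstract, I would work with a modified functional of the form
\[
\mathsf H[g]:=\mathcal E[g]+\iirdmu{\varphi''(g)\,\bigl(\alpha\,|\nabla_v g|^2+2\,\beta\,\nabla_v g\cdot\nabla_x g+\gamma\,|\nabla_x g|^2\bigr)}\,,
\]
where $\alpha,\gamma>0$ and $\alpha\,\gamma>\beta^2$, so that $\mathsf H[g]\ge\mathcal E[g]$. Its time derivative is computed by combining repeated integration by parts against $d\mu$ with the commutator identities, specific to the harmonic potential,
\[
[\nabla_v,\mathsf T]=\nabla_x\,,\quad[\nabla_x,\mathsf T]=-\nabla_v\,,\quad[\nabla_v,\mathsf L]=-\nabla_v\,,\quad[\nabla_x,\mathsf L]=0\,,
\]
which merely reflect $\nabla^2\potential=\mathrm{Id}$.

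After simplification, the resulting expression splits into nonnegative second-order dissipations (in $D^2_v g$ and $D^2_{xv} g$), a first-order quadratic form in the pair $(\nabla_v g,\nabla_x g)$, and, when $p\ne 2$, cubic remainders in the gradients proportional to $\varphi'''(g)$ that come from the chain rule applied to $\varphi''$. These cubic terms are controlled precisely by the hypothesis $2\,(\varphi''')^2\le\varphi''\,\varphi^{(iv)}$ from~\eqref{Ass:Convexity}, absorbed into the dissipative part by a pointwise Cauchy--Schwarz inequality. Discarding the nonnegative Hessian contributions, a careful choice of $(\alpha,\beta,\gamma)$ with $\beta$ correctly sized with respect to $\alpha,\gamma$ yields $\frac d{dt}\,\mathsf H[g]\le-\,\mathsf H[g]$; Gronwall's lemma combined with $\mathcal E[g]\le\mathsf H[g]$ gives~\eqref{ExpDecayVFP} with $\mathcal C=\mathsf H[g_0]$, which extends to all admissible data by a standard approximation argument.

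Sharpness as $t\to+\infty$ is obtained from the slowest mode. For any $i\in\{1,\dots,d\}$ the subspace $\mathrm{span}\{x_i,v_i\}$ is invariant under $\mathsf T-\mathsf L$, which acts by $x_i\mapsto v_i$ and $v_i\mapsto -x_i+v_i$; its eigenvalues are $\tfrac12\pm\tfrac{i\sqrt 3}2$. Choosing $g_0=1+\eps\,(x_i+v_i)$, the solution of~\eqref{VFP2} keeps the form $g(t,\cdot,\cdot)=1+\eps\,(a(t)\,x_i+b(t)\,v_i)$ where $(a,b)$ solves the associated linear ODE, so $\iirdmu{(g-1)^2}\sim\eps^2\,e^{-t}$ and, since $\varphi_p''(1)=p>0$, $\mathcal E[g(t,\cdot,\cdot)]\sim\tfrac p2\,\eps^2\,e^{-t}$, ruling out any faster exponential rate. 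The main difficulty is the uniform tuning of $(\alpha,\beta,\gamma)$ over $p\in[1,2]$: in the range $p\in[1,2)$ the nonlinear remainder forces~\eqref{Ass:Convexity} to be used essentially optimally if the sharp constant~$1$ in the exponent is to be preserved.
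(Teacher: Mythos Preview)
Your overall strategy (an $\mathrm H^1$-type twisted Fisher functional, commutator identities, matrix optimization) is the right one, and your sharpness argument via the linear mode with eigenvalues $\tfrac12\pm\tfrac{i\sqrt3}2$ is correct: the paper uses the decentred Gaussian $f_\star(x-x_\star(t),v-v_\star(t))$, which is exactly the exponential of your linear perturbation and has the advantage of being a genuine nonnegative solution for all $p\in[1,2]$.

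There is, however, a genuine gap in the decay part. You include $\mathcal E[g]$ in the Lyapunov functional $\mathsf H$ and then claim that, after ``discarding the nonnegative Hessian contributions'', a suitable $(\alpha,\beta,\gamma)$ gives $\tfrac d{dt}\mathsf H\le-\mathsf H$. This cannot hold with the sharp constant $1$. Writing the first-order quadratic form with matrices $\mathfrak M_0=\bigl(\begin{smallmatrix}1&\lambda\\\lambda&\nu\end{smallmatrix}\bigr)$ and $\mathfrak M_1=\bigl(\begin{smallmatrix}1-\lambda&(1+\lambda-\nu)/2\\(1+\lambda-\nu)/2&\lambda\end{smallmatrix}\bigr)$, one checks that $\max_{(\lambda,\nu)}\min_X \tfrac{X^\top\mathfrak M_1X}{X^\top\mathfrak M_0X}=\tfrac12$, attained only at $(\lambda,\nu)=(\tfrac12,1)$, where $\mathfrak M_1=\tfrac12\mathfrak M_0$ \emph{exactly}. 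There is therefore no slack in the first-order part to compensate the additional term $-\mathcal E$ that appears on the right-hand side once $\mathcal E$ sits in $\mathsf H$; the missing piece would be a bound of $\mathcal E$ by $\iirdmu{\varphi''(g)|\nabla_vg|^2}$ alone, which is false. At best your functional gives $\tfrac d{dt}\mathsf H\le-\rho\,\mathsf H$ for every $\rho<1$ (by taking $\alpha,\gamma$ large), but not $\rho=1$.

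The paper avoids this by \emph{removing $\mathcal E$ from the Lyapunov functional}. It sets $h=g^{p/2}$ (so that the weighted Fisher becomes the plain quadratic $\iirdmu{|\nabla h|^2}$), works only with
\[
\mathcal J[h]=\tfrac12\iirdmu{|\nabla_vh|^2}+\tfrac12\iirdmu{|\nabla_xh|^2}+\tfrac12\iirdmu{|\nabla_xh+\nabla_vh|^2},
\]
and the matrix computation at $(\lambda,\nu)=(\tfrac12,1)$ yields $\tfrac d{dt}\mathcal J\le-\mathcal J$ with equality in the first-order part and $\mathfrak M_2\ge0$ handling the Hessian block. The entropy is then controlled \emph{a posteriori} by the Gaussian entropy--entropy production inequality~\eqref{Ineq:E-EPb} (Proposition~\ref{Prop:BE}), which gives $\mathcal E[g]\le C\,\mathcal J[h]$ and hence $\mathcal E[g(t)]\le C\,\mathcal J[h_0]\,e^{-t}$. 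In short: drop $\mathcal E$ from $\mathsf H$, keep only the twisted Fisher, and invoke the interpolation inequality separately; then the sharp rate $1$ comes out without further tuning.
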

The striking point of this \emph{hypocoercivity} result is to identify the sharp rate of decay. The rate is specific of the harmonic potential $\potential(x)=|x|^2/2$, but it turns out to be useful for the comparison with rates obtained by other methods. Although probably not optimal, a precise estimate of $\mathcal C$ will be given in Section~\ref{Sec:kFPsharp}, with a simplified proof of Proposition~\ref{Prop:AE}.

The method is based on the use of a \emph{Fisher information} type functional
\be{TwistedFisher}
\mathcal J[h]=\tfrac12\iri{|\nabla_vh|^2}+\tfrac12\iri{|\nabla_xh|^2}+\tfrac12\iri{|\nabla_xh+\nabla_vh|^2}
\ee
which involves derivatives in $x$ and $v$. If $h$ solves~\eqref{VFP2}, then the key estimate is to prove that
\[
\frac d{dt}\mathcal J[h(t,\cdot)]\le-\,\mathcal J[h(t,\cdot)]\,.
\]
The result of Proposition~\ref{Prop:AE} follows from the entropy -- entropy production inequality~\eqref{Ineq:E-EPb} that will be established in Proposition~\ref{Prop:BE}: since
\[
\Lambda\,\mathcal E[g(t,\cdot,\cdot)]=\Lambda\,\mathcal E[h^{2/p}]\le\mathcal J[h]\,,
\]
then $\mathcal E[g(t,\cdot,\cdot)]$ has an exponential decay. However, we underline the fact that
\[
\frac d{dt}\mathcal E[g(t,\cdot)]=-\iri{|\nabla_vh|^2}\neq-\,\mathcal J[h(t,\cdot)]\,.
\]

At the level of non-degenerate diffusions, a distinctive property of the $\varphi$\emph{-entropy} with $\varphi=\varphi_p$ and $p\in(1,2)$ is that the entropy -- entropy production inequality
$\mathcal I\ge\Lambda\,\mathcal E$ with an optimal constant $\Lambda>0$ can be improved in the sense that there exists a strictly convex function $F$ on $\R^+$ with $F(0)=0$ and $F'(0)=1$ such that $\mathcal I\ge\Lambda\,F(\mathcal E)$. This has been established in~\cite{MR2152502} and details will be given in Section~\ref{Sec:iEEP}. The key issue is to prove that for some function $\rho$ on $\R^+$, which depends on the solution~$w$, such that $\rho>\Lambda$ a.e., we have $\frac d{dt}\mathcal I[w(t,\cdot)]\le-\,\rho(t)\,\mathcal I[w(t,\cdot)]$. One may wonder if a similar result also holds in the hypocorcive kinetic Fokker-Planck equation. So far, no global improved inequality has been established. What we shall prove is that, if we consider the more general \emph{Fisher information} functional
\be{TwistedFisher2}
\mathcal J_\lambda[h]=(1-\lambda)\iri{|\nabla_vh|^2}+(1-\lambda)\iri{|\nabla_xh|^2}+\lambda\iri{|\nabla_xh+\nabla_vh|^2}\,,
\ee
then for an appropriate choice of $\lambda$ (which turns out to be $t$-dependent), the rate of decay is faster than $e^{-t}$ up to a zero-measure set in $t$. The precise statement, which is our main result, goes as follows.
\begin{theorem}\label{Thm:Main} Let $p\in(1,2)$ and $h$ be a solution of~\eqref{VFP3} with initial datum $h_0\in\mathrm L^1\cap\mathrm L^p(\R^d,d\gamma)$, $h_0\not\equiv1$, and $d\gamma$ be the Gaussian probability measure corresponding to the harmonic potential potential $\potential(x)=|x|^2/2$. Then there exists a function $\lambda:\R^+\to[1/2,1)$ such that $\lambda(0)=\lim_{t\to+\infty}\lambda(t)=1/2$ and a continuous function~$\rho$ on $\R^+$ such that $\rho>1/2$ a.e., for which we have
\[
\frac d{dt}\mathcal J_{\lambda(t)}[h(t,\cdot)]\le-\,2\,\rho(t)\,\mathcal J_{\lambda(t)}[h(t,\cdot)]\quad\forall\,t\ge0\,.
\]
As a consequence, for any $t\ge0$ we have the global estimate
\[
\mathcal J_{\lambda(t)}[h(t,\cdot)]\le\mathcal J_{1/2}[h_0]\,\exp\(-\,2\int_0^t\rho(s)\,ds\)\,.
\]\end{theorem}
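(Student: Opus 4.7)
My plan is to differentiate $t\mapsto\mathcal J_{\lambda(t)}[h(t,\cdot)]$ along~\eqref{VFP3} for a well-chosen curve $\lambda(t)\in[1/2,1)$, then conclude with a Grönwall argument. The underlying idea is that Proposition~\ref{Prop:AE} already yields decay at rate $e^{-t}$ with the static choice $\lambda\equiv 1/2$; one exploits the extra dissipation generated by the nonlinear correction $\tfrac{2-p}p|\nabla_v h|^2/h$ in~\eqref{VFP3} (nonzero only when $p<2$) to speed up the decay away from equilibrium, at the cost of a $t$-dependent twist of $\lambda$.

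First I would compute $\tfrac d{dt}\mathcal J_{\lambda(t)}[h(t,\cdot)]$ for a generic $C^1$ function $\lambda(\cdot)$, using: (i) the commutators $[\nabla_v,\mathsf T]=\nabla_x$, $[\nabla_x,\mathsf T]=-\nabla_v$, $[\nabla_v,\mathsf L]=-\nabla_v$ and $[\nabla_x,\mathsf L]=0$, which hold precisely because $\potential(x)=|x|^2/2$ is quadratic; (ii) the anti-self-adjointness of $\mathsf T$ and the self-adjointness of $\mathsf L$ on $\mathrm L^2(d\mu)$; (iii) chain rule plus integration by parts for the nonlinear correction. This yields an identity of the schematic form
\[
\frac d{dt}\mathcal J_{\lambda(t)}[h]\;=\;-\,2\,\mathcal Q_{\lambda(t)}[h]\;+\;\dot\lambda(t)\,\partial_\lambda\mathcal J_\lambda[h]\;-\;(2-p)\,\mathcal N_{\lambda(t)}[h],
\]
where $\mathcal Q_\lambda$ is the quadratic form already underlying the proof of Proposition~\ref{Prop:AE} (so that $\mathcal Q_{1/2}[h]\ge\tfrac12\mathcal J_{1/2}[h]$) and $\mathcal N_\lambda[h]\ge 0$ is a remainder expressible as a positive combination of integrals such as $\int|\nabla_v h|^4/h^2\,d\mu$, strictly positive as soon as $h$ is not an equilibrium.

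The key step is to select $\lambda(t)\in[1/2,1)$ and a companion function $\rho(t)$ so that the right-hand side above is bounded by $-2\rho(t)\mathcal J_{\lambda(t)}[h]$ with $\rho(t)>1/2$ a.e. I would take $\lambda(t)$ as a $C^1$ regularization of the pointwise-in-$t$ maximizer of the coercivity quotient
\[
\rho[h;\lambda]\;:=\;\frac{\mathcal Q_\lambda[h]+\tfrac{2-p}2\,\mathcal N_\lambda[h]}{\mathcal J_\lambda[h]},
\]
subject to $\lambda(0)=\lim_{t\to+\infty}\lambda(t)=1/2$. The main obstacle is handling the parasitic term $\dot\lambda(t)\,\partial_\lambda\mathcal J_\lambda[h]$: the strict positivity of $\mathcal N_\lambda[h(t,\cdot)]$ away from equilibrium provides the slack needed to absorb $|\dot\lambda\,\partial_\lambda\mathcal J_\lambda[h]|$ while keeping $\rho-1/2$ positive almost everywhere. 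That $\mathcal N_\lambda[h(t,\cdot)]=0$ can happen only on a zero-measure set follows because otherwise $h$ would be forced to be independent of $v$ on an interval of positive measure, and the transport part of~\eqref{VFP3} would then propagate $h\equiv 1$, contradicting $h_0\not\equiv 1$.

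The boundary conditions emerge naturally: at $t=0$ only $\mathrm L^1\cap\mathrm L^p$-control on $h_0$ is assumed, forcing $\lambda(0)=1/2$ (no improvement is claimed there); as $t\to+\infty$ the solution relaxes to equilibrium, $\mathcal N_\lambda[h(t,\cdot)]\to 0$, and the optimum returns to $\lambda=1/2$, consistent with the sharp asymptotic rate of Proposition~\ref{Prop:AE}. Continuity of $\rho$ follows from parabolic regularity for~\eqref{VFP3} and smooth dependence of $\mathcal Q_\lambda$, $\mathcal N_\lambda$ on $h$. Grönwall's lemma applied to $\tfrac d{dt}\mathcal J_{\lambda(t)}[h]\le-\,2\,\rho(t)\,\mathcal J_{\lambda(t)}[h]$, together with $\lambda(0)=1/2$, then produces the claimed global estimate $\mathcal J_{\lambda(t)}[h(t,\cdot)]\le\mathcal J_{1/2}[h_0]\exp\bigl(-2\int_0^t\rho(s)\,ds\bigr)$.
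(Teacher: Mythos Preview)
Your outline captures the right high-level strategy---differentiate $\mathcal J_{\lambda(t)}$ along the flow, exploit the $p\in(1,2)$ nonlinearity for extra dissipation, tune $\lambda(t)$---but two concrete steps are either incorrect or missing, and without them the argument does not close.

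First, the decomposition $\tfrac d{dt}\mathcal J_{\lambda}=-2\,\mathcal Q_\lambda+\dot\lambda\,\partial_\lambda\mathcal J_\lambda-(2-p)\,\mathcal N_\lambda$ with $\mathcal N_\lambda\ge0$ ``a positive combination of integrals such as $\int|\nabla_vh|^4/h^2\,d\mu$'' is not what the computation produces. Differentiating the nonlinear correction $\tfrac{2-p}p\,|\nabla_vh|^2/h$ generates cross terms of the type $\int\mathrm{Hess}_{vv}h:\tfrac{\nabla_vh\otimes\nabla_vh}{h}\,d\mu$ and $\int\mathrm{Hess}_{xv}h:\tfrac{\nabla_xh\otimes\nabla_vh}{h}\,d\mu$, which are \emph{not} sign-definite on their own. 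One cannot separate a nonnegative ``nonlinear remainder'' from the Hessian part; instead, all second-order objects must be gathered into a single quadratic form $Y^\top\mathfrak M_2(\lambda)\,Y$ with $Y=(\mathrm{Hess}_{vv}h,\mathrm{Hess}_{xv}h,\tfrac{\nabla_vh\otimes\nabla_vh}{h},\tfrac{\nabla_xh\otimes\nabla_vh}{h})$, and one must verify that the $4\times4$ matrix $\mathfrak M_2$ is nonnegative, with \emph{strictly} positive lowest eigenvalue $\lambda_1(p,\lambda)$ at $\lambda=\tfrac12$ precisely when $p\in(1,2)$. This spectral computation is where the restriction $p\in(1,2)$ enters.

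Second, and more seriously, even once $Y^\top\mathfrak M_2\,Y\ge\lambda_1|Y|^2>0$ is established, this is a \emph{second-order} gain and cannot be compared directly with the first-order functional $\mathcal J_\lambda[h]$. The missing bridge is a Cauchy--Schwarz step: from $|Y|^2\ge\int|\nabla_vh|^4/h^2\,d\mu$ and $\bigl(\int|\nabla_vh|^2\,d\mu\bigr)^2\le\int h^2\,d\mu\cdot\int|\nabla_vh|^4/h^2\,d\mu$, together with the uniform bound $\int h^2\,d\mu\le 1+(p-1)\,\mathcal E[h_0^{2/p}]=:c_0$, one extracts a first-order bonus $\varepsilon\int|\nabla_vh|^2\,d\mu$ with $\varepsilon=\lambda_1\,c_0^{-1}\int|\nabla_vh|^2\,d\mu$. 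It is this $\varepsilon$, not your $\mathcal N_\lambda$, that feeds into the $2\times2$ first-order comparison $\mathfrak M_1+\varepsilon\,\mathrm{diag}(1,0)-\tfrac12\dot\lambda\bigl(\begin{smallmatrix}0&1\\1&0\end{smallmatrix}\bigr)\ge\rho\,\mathfrak M_0$ and allows $\rho>\tfrac12$. The paper then constructs $\lambda(t)$ explicitly and piecewise, distinguishing the regime where $\mathsf a(t):=e^t\int|\nabla_vh|^2\,d\mu$ stays bounded below from the (isolated) times where $\mathsf a(t_0)=0$; your proposal to take $\lambda(t)$ as a regularized maximizer of a coercivity quotient skips over exactly this construction, which is where the actual work lies.
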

This result is weaker than the result for non-degenerate diffusions. The qualitative issues are easy to understand and to some extent classical in the hypocoercivity theory, but no quantitative estimate of $\rho$ in terms of~$h$ is known so far. If $\varphi_p$-entropies were initially thought as interesting objects which interpolate between the Gibbs entropy and standard $\mathrm L^2$ estimates, improved entropy -- entropy production inequalities and the result of Theorem~\ref{Thm:Main} capture an important feature when $p\in(1,2)$: faster rates of decay for finite values of $t$. As $t\to+\infty$, we cannot expect a faster decay rate, but we gain a pre-factor which is less than $1$. See Section~\ref{Sec:conclusion} for more details.

\medskip Let us conclude this introduction with a brief review of the literature. Fokker-Planck equations like~\eqref{Eqn:FP} are ubiquitous in various areas of physics ranging from the description of the motion of particles in a gas or a solute to semi-conductor physics, models of stars in astrophysics or models of populations in biology and social sciences, as microscopic dynamics involving Brownian motion are represented at macroscopic scales by diffusion equations. Second order dynamics (in which forces produce acceleration) in random environments obey in many cases to the Langevin equation and at macroscopic scale the corresponding distribution function solves~\eqref{VFP1}. A typical example is given by particles having random encounters with some background obstacles, a situation that can be encountered in many areas of physical modeling. It has to be emphasized that~\eqref{Eqn:FP} appears in the diffusion limit of the solutions of~\eqref{VFP1}, that is, in the \emph{overdamped regime} in which friction and other forces equilibrate very fast, so that the velocity instantaneously adapts to the forces, which results in first order dynamics. For some general properties of~\eqref{Eqn:FP} and~\eqref{VFP1}, a review of stochastic and PDE methods and some entries to applied cases, we refer for instance to~\cite{MR749386,MR3288096}, among many other books on this topic.

The word ``hypocoercivity'' is apparently due to T.~Gallay and was made popular by C.~Villani in~\cite{MR2275692}. Our computations are based on Villani's ideas in~Section~3 of~\cite{MR2275692} (also see~\cite{MR2562709}), but the use of \emph{twisted gradients} involving simultaneously derivatives in $x$ and $v$ can be also found in~\cite{MR2294477} and in earlier works like~\cite{MR2034753}. It is actually a consequence of H\"ormander's hypoelliptic theory, which covers simultaneously regularization properties and large time behaviour. One can refer for instance to~\cite{MR1969727,MR2034753,MR2130405} and, much earlier, to~\cite{MR0222474}. The seed for such an approach can actually be traced back to Kolmogorov's computation of Green's kernel for the kinetic Fokker-Planck equation in~\cite{MR1503147}, which has been reconsidered by~\cite{il1964equations} from a more modern point of view and successfully applied, for instance, to the study of the Vlasov-Poisson-Fokker-Planck system in~\cite{MR1052014,MR1126388,MR1200643}. In case of the kinetic Fokker-Planck equation, we can refer to~\cite{MR2130405,MR2294477} in the case of a general potential of confinement, and more specifically to~\cite{2014arXiv1409.5425A}. In this last paper, the authors deal with the issue of accurate rates: ``while the main theorem in~\cite{MR2562709} covers a wide class of problems, the price paid is in the estimate for the decay rate, which is off by orders of magnitude.'' The result of Proposition~\ref{Prop:AE} addresses the issue of the optimal rate in a very simple case. For completion, one also has to mention~\cite{doi:10.1063/1.4977475} and~\cite{2017arXiv170203685I} for further theoretical and numerical results.

A twin problem of the kinetic Fokker-Planck equation is the linear BGK model, which has no regularizing properties but shares many common features with the kinetic Fokker-Planck equation as soon as we are concerned with rates of convergence. We refer to~\cite{Herau,Mouhot-Neumann} for early contributions, to~\cite{Dolbeault2009511,MR3324910,2017arXiv170806180B,2017arXiv170310504M,MR3557714} for more recent ones, and especially to~\cite{2017arXiv170204168E}. In this last paper, J.~Evans studies the linear BGK model and a kinetic Fokker-Planck equation on the torus using the $\varphi$-entropies.

In~\cite{MR2275692}, only the cases $p=1$ and $p=2$ were considered, but it is well known since the founding work~\cite{Bakry-Emery85} of Bakry and Emery that intermediate values of $p$ can then be considered. In the case of $\varphi$-entropies associated with non-degenerate diffusions, this idea was invoked on many occasions, for instance in~\cite{MR954373,MR1796718,MR2081075,Arnold-Markowich-Toscani-Unterreiter01,MR2609029,MR3247073} in relation with spectral estimates or the \emph{carr\'e du champ} methods. For \emph{carr\'e du champ} techniques in kinetic equations, we can refer to~\cite{MR3677826}, also~\cite{Monmarche2018,MR3438742}, and finally Remark~6.7 in~\cite{MR2381160} for an early contribution on $\varphi$-entropies. Although $\varphi$-entropies are natural in the context of the kinetic Fokker-Planck equation, precise connections were made only quite recently. In~\cite{2014arXiv1409.5425A}, A.~Arnold and J.~Erb discuss $\varphi$-entropies in the context of the kinetic Fokker-Planck equation and prove, among more general results, Proposition~\ref{Prop:AE}. We can also refer to~\cite{MR3557714,MR3468699,Monmarche2018} for various related results. As far as we know, no result such as Theorem~\ref{Thm:Main} has been established yet.

\section{A review of results on \texorpdfstring{$\varphi$}{varphi}-entropies}\label{Sec:PhiEntropy}

In this section we consider a $\varphi$-entropy defined by $\mathcal E[w]:=\idg{\varphi(w)}$ where $d\gamma=e^{-\potential}\,dx$ is a probability measure and $\varphi$ satisfies~\eqref{Ass:Convexity}. Most of the results presented here are known, but they are scattered in the literature. Our purpose here is to collect some essential statements and present simple proofs.

\subsection{Generalized Csisz\'ar-Kullback-Pinsker inequality}\label{sec:CKgen}

By assumption~\eqref{Ass:Convexity}, we know that $\mathcal E$ is nonnegative and achieves its minimum at $w\equiv1$. It results from the strict convexity of $\varphi$ that $\mathcal E[w]$ controls a norm of $(w-1)$ under a generic assumption compatible with the expression of $\varphi_p$. The classical result of~\cite{MR0213190,Csiszar67,Kullback67} has been extended in\cite{kemperman1969optimum,MR1801751,Caceres-Carrillo-Dolbeault02,csiszar2011information}. Here is a statement, with a short proof taken from Section~1.4 of~\cite{BDIK}, for completeness.
\begin{proposition}\label{prop:CK} Let $p\in[1,2]$, $w\in\mathrm L^1\cap\mathrm L^p(\R^d,d\gamma)$ be a nonnegative function, and assume that $\varphi\in C^2(0,+\infty)$ is a nonnegative strictly convex function such that $\varphi(1)=\varphi'(1)=0$. If $A:=\inf_{s\in(0,\infty)}s^{2-p}\,\varphi''(s)>0$, then
\[\label{CKgen}
\mathcal E[w]\ge2^{-\frac2p}\,A\,\min\,\left\{1,\nrmg wp^{p-2}\right\}\,\nrmg{w-1}p^2\,.
\]\end{proposition}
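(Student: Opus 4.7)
The plan is to first obtain a pointwise (in $w$) lower bound on $\varphi(w)$ in terms of $(w-1)^2$ with an appropriate weight, and then to interpolate via H\"older's inequality to recover the $\mathrm L^p$ norm.

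I would start from Taylor's formula with integral remainder at the point $s=1$, which together with $\varphi(1)=\varphi'(1)=0$ gives
\[
\varphi(w)=(w-1)^2\int_0^1(1-t)\,\varphi''\!\big(1+t(w-1)\big)\,dt\,.
\]
Since $1+t(w-1)$ lies between $\min(1,w)$ and $\max(1,w)$ for $t\in[0,1]$, and since $p-2\le0$ so that $s\mapsto s^{p-2}$ is nonincreasing, the hypothesis $\varphi''(s)\ge A\,s^{p-2}$ combined with this monotonicity yields
\[
\varphi''\!\big(1+t(w-1)\big)\ge A\,\max(1,w)^{p-2}\qquad\forall\,t\in[0,1]\,.
\]
Integrating the factor $(1-t)$ from $0$ to $1$ produces the pointwise bound
\[
\varphi(w)\ge\tfrac A2\,\max(1,w)^{p-2}\,(w-1)^2\,.
\]

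Next I would integrate against $d\gamma$ and apply H\"older's inequality with exponents $2/p$ and $2/(2-p)$ to the factorization
\[
|w-1|^p=\Big(|w-1|^2\,\max(1,w)^{p-2}\Big)^{p/2}\,\max(1,w)^{p(2-p)/2}\,,
\]
which gives
\[
\|w-1\|_p^p\le\left(\idg{|w-1|^2\,\max(1,w)^{p-2}}\right)^{p/2}\left(\idg{\max(1,w)^p}\right)^{(2-p)/2}.
\]
Using $\max(1,w)^p\le1+w^p$, the last integral is bounded by $1+\|w\|_p^p\le2\,\max(1,\|w\|_p^p)$. Rearranging and raising to the power $2/p$ turns the estimate into
\[
\idg{|w-1|^2\,\max(1,w)^{p-2}}\ge2^{-(2-p)/p}\,\min\!\big(1,\|w\|_p^{p-2}\big)\,\|w-1\|_p^2\,.
\]

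Combining with the pointwise inequality and simplifying $\tfrac12\cdot 2^{-(2-p)/p}=2^{-2/p}$ yields the claimed bound. I expect the only delicate step to be the choice of the H\"older exponents and of the factorization: one must split $|w-1|^p$ so that the first factor, after raising to the H\"older exponent, reconstructs exactly the weight $\max(1,w)^{p-2}$ that appeared in the pointwise bound, which forces the choice $(r,r')=(2/p,2/(2-p))$ and the exponent $p(2-p)/2$ on the complementary weight. Everything else is a routine manipulation, and the endpoint cases $p=1$ (giving the classical Csisz\'ar--Kullback--Pinsker bound up to a factor of $2$) and $p=2$ (where the bound is sharp and reduces to $\|w-1\|_2^2$) provide convenient sanity checks.
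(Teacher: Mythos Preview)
Your proof is correct and follows essentially the same strategy as the paper: a second-order Taylor expansion of $\varphi$ at $1$ followed by a H\"older inequality with exponents $(2/p,\,2/(2-p))$ to pass from the weighted $\mathrm L^2$ bound to the $\mathrm L^p$ norm. The only difference is packaging: the paper splits the domain into $\{w>1\}$ and $\{w\le1\}$, applies H\"older on each piece with the weights $h=w$ and $h=1$ respectively, and recombines via $(a+b)^{2/p}\le2^{2/p-1}(a^{2/p}+b^{2/p})$; you instead carry the single weight $\max(1,w)$ globally and use $\max(1,w)^p\le1+w^p$ together with $1+\|w\|_p^p\le2\max(1,\|w\|_p^p)$, which is a slightly more streamlined route to the same constant $2^{-2/p}$.
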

When $\varphi=\varphi_p$, we find that $A=p$. This inequality has many variants and extensions: it is not limited to $\R^d$ but also holds on bounded domains or manifolds and the relative $\varphi$-entropy $\idg{\big(\varphi(w_1)-\varphi(w_2)-\varphi'(w_1)\,(w_2-w_1)\big)}$ can also be used to measure $\nrmg{w_2-w_1}p^2$.

\begin{proof} Up to the addition of a small constant, we can assume that $w>0$ and argue by density. A Taylor expansion at order two shows that
\[
\mathcal E[w]=\frac12\idg{\varphi''(\xi)\,|w-1|^2}\ge\frac A2\idg{\xi^{p-2}\,|w-1|^2}
\]
where $\xi$ lies between $1$ and $w$. With $\alpha=p\,(2-p)/2$ and $h>0$, for any measurable set ${\mathcal A}\subset\R^d$, we get
\[
\int_{\mathcal A}|w-1|^p\,h^{-\alpha}\,h^\alpha\,d\gamma\le\(\int_{\mathcal A}|w-1|^2\,h^{p-2}\,d\gamma\)^{p/2}\(\int_{\mathcal A}h^p\,d\gamma\)^{(2-p)/2}
\]
by H\"older's inequality. We apply this formula to two different sets.\\
On $\mathcal A=\{x\in\R^d\,:\,w(x)>1\}$, we use $\xi^{p-2}\ge w^{p-2}$ and take $h=w$:
\[
\int_{\{w>1\}}|w-1|^2\,\xi^{p-2}\,d\gamma\ge\(\int_{\{w>1\}}|w-1|^p\,d\gamma\)^{2/p}\,\nrmg wp^{p-2}\,.
\]
On $\mathcal A=\{x\in\R^d\,:\,w(x)\le1\}$, we use $\xi^{p-2}\ge1$ and take $h=1$:
\[
\int_{\{w\le1\}}|w-1|^2\,\xi^{p-2}\,d\gamma\ge\(\int_{\{w\le1\}}|w-1|^p\,d\gamma\)^{2/p}\,.
\]
By adding these two estimates and using with $r=2/p\ge1$ the elementary inequality $(a+b)^r\le2^{r-1}(a^r+b^r)$ for any $a$, $b\ge0$ allows us to conclude the proof.\end{proof}

\subsection{Convexity, tensorization and sub-additivity}\label{Sec:Tensorization}

Let us turn our attention to~\eqref{Ineq:E-EP}. To start with, we observe that the functional $w\mapsto\mathcal I[w]=\idg{\varphi''(w)\,|\nabla w|^2}$ is convex if and only if $1/\varphi''$ is concave. Now let us consider two probability measures $d\gamma_1$ and $d\gamma_2$ defined respectively on $\R^{d_1}$ and $\R^{d_2}$, such that Inequality~\eqref{Ineq:E-EP} holds with $\gamma=\gamma_i$, and $i=1$, $2$:
\be{Ineq:E-EP2}
\int_{\R^{d_i}}\varphi''(w)\,|\nabla w|^2\,d\gamma_i=:\mathcal I_{\gamma_i}[w]\ge\Lambda_i\,\mathcal E_{\gamma_i}[w]\quad\forall\,w\in\mathrm H^1(\R^{d_i},d\gamma_i)\,,
\ee
Here we denote by $\mathcal E_\gamma$ the $\varphi$-entropy for functions which are not normalized, that~is,
\[
\mathcal E_\gamma[w]:=\idg{\varphi(w)}-\varphi\(\idg w\)\,.
\]
Assuming that $d\gamma$ is a probability measure, by Jensen's inequality we know that $w\mapsto\mathcal E_\gamma[w]$ is nonnegative because $\varphi$ is convex. As we shall see below, $w\mapsto\mathcal E_\gamma[w]$ is also convex, which is the key ingredient for \emph{tensorization}. The question at stake is to know if Inequality~\eqref{Ineq:E-EP} holds on $\R^{d_1}\times\R^{d_2}$ for the measure $d\gamma=d\gamma_1\otimes\gamma_2$. Most of the results of Section~\ref{Sec:Tensorization} have been stated in~\cite{MR2081075} or are considered as classical. Our contribution here is to give simplified proofs.
\begin{theorem}\label{Thm:TensPhi} Assume that $\varphi$ satisfies~\eqref{Ass:Convexity}. If $d\gamma_1$ and $d\gamma_2$ are two probability measures on $\R^{d_1}\times\R^{d_2}$ satisfying~\eqref{Ineq:E-EP2} with positive constants $\Lambda_1$ and $\Lambda_2$, then $d\gamma_1\otimes\gamma_2$ is such that the following inequality holds:
\begin{multline*}
\mathcal I_{\gamma_1\otimes\gamma_2}[w]=\int_{\R^{d_1}\times\R^{d_2}}\varphi''(w)\,|\nabla w|^2\,d\gamma_1\,d\gamma_2\\
\ge\min\{\Lambda_1,\Lambda_2\}\,\mathcal E_{\gamma_1\otimes\gamma_2}[w]\quad\forall\,w\in\mathrm H^1(\R^{d_1}\times\R^{d_2},d\gamma)\,.
\end{multline*}
\end{theorem}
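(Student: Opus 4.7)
The plan is to reduce the tensorized inequality on $\R^{d_1}\times\R^{d_2}$ to the factor inequalities~\eqref{Ineq:E-EP2} via a one-variable conditioning identity, plus a Jensen step that is exactly powered by the hypothesis $(1/\varphi'')''\le 0$.

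First I would prove the decomposition identity
\[
\mathcal E_{\gamma_1\otimes\gamma_2}[w]=\iomutwo{\mathcal E_{\gamma_1}[w(\cdot,x_2)]}+\mathcal E_{\gamma_2}[F],\quad F(x_2):=\iomuone{w(\cdot,x_2)}.
\]
This is obtained by adding and subtracting $\int_{\R^{d_2}}\varphi(F)\,d\gamma_2$ in the definition of $\mathcal E_{\gamma_1\otimes\gamma_2}[w]$ and invoking Fubini together with $\iint w\,d\gamma_1\otimes\gamma_2=\int F\,d\gamma_2$. Applying~\eqref{Ineq:E-EP2} with $i=1$ to the slice $w(\cdot,x_2)$ for a.e.\ $x_2$ and then integrating in $x_2$ dominates the first term by $\Lambda_1^{-1}\iomuonetwo{\varphi''(w)\,|\nabla_{x_1}w|^2}$. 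Applying~\eqref{Ineq:E-EP2} with $i=2$ to $F$ yields $\mathcal E_{\gamma_2}[F]\le\Lambda_2^{-1}\int\varphi''(F)\,|\nabla_{x_2}F|^2\,d\gamma_2$.

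The main work is then to replace the $\gamma_2$-integrand involving $F$ by one involving $w$. I would do this through Jensen's inequality applied to the function
\[
g:(0,\infty)\times\R^{d_2}\to\R,\quad g(u,v):=\varphi''(u)\,|v|^2,
\]
with respect to the probability measure $d\gamma_1$, noting that $F(x_2)$ and $\nabla_{x_2}F(x_2)$ are precisely the $\gamma_1$-averages of $w(\cdot,x_2)$ and $\nabla_{x_2}w(\cdot,x_2)$. This delivers
\[
\varphi''(F)\,|\nabla_{x_2}F|^2\le\iomuone{\varphi''(w)\,|\nabla_{x_2}w|^2}
\]
provided $g$ is convex. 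The key obstacle is therefore verifying this convexity from the assumption~\eqref{Ass:Convexity}; I would compute the Hessian and observe that the block involving $v$ has eigenvalue $2\varphi''(u)\ge0$, while the $2\times2$ diagonal principal minor in $(u,v_j)$ reads $2v_j^2\bigl(\varphi''(u)\,\varphi^{(iv)}(u)-2\,(\varphi'''(u))^2\bigr)$. The inequality $2(\varphi''')^2\le\varphi''\,\varphi^{(iv)}$ stated just after~\eqref{Ass:Convexity} is exactly the condition $(1/\varphi'')''\le0$ and yields nonnegativity of all principal minors, hence convexity of $g$.

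Combining the three estimates gives
\[
\mathcal E_{\gamma_1\otimes\gamma_2}[w]\le\frac{1}{\Lambda_1}\iomuonetwo{\varphi''(w)\,|\nabla_{x_1}w|^2}+\frac{1}{\Lambda_2}\iomuonetwo{\varphi''(w)\,|\nabla_{x_2}w|^2},
\]
which is bounded above by $\min\{\Lambda_1,\Lambda_2\}^{-1}\,\mathcal I_{\gamma_1\otimes\gamma_2}[w]$ since $|\nabla w|^2=|\nabla_{x_1}w|^2+|\nabla_{x_2}w|^2$. This yields the asserted inequality. I would close with the standard density remark that the argument, which is formally stated for smooth positive $w$ (so that slices are admissible and $F>0$), extends to $\mathrm H^1(\R^{d_1}\times\R^{d_2},d\gamma_1\otimes\gamma_2)$ by approximating $w$ by $w+\varepsilon$ and passing to the limit using the continuity of $\varphi$ and $\varphi''$ on $(0,\infty)$.
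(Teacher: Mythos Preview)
Your proof is correct but follows a genuinely different route from the paper's. The paper proves the theorem by first establishing the \emph{sub-additivity} of the entropy,
\[
\mathcal E_{\gamma_1\otimes\gamma_2}[w]\le\iomutwo{\mathcal E_{\gamma_1}[w]}+\iomuone{\mathcal E_{\gamma_2}[w]}\,,
\]
which it derives from the convexity of the functional $w\mapsto\mathcal E_\gamma[w]$ (Lemma~\ref{Lem:ConvPhi}) together with an abstract Jensen-type lemma (Lemma~\ref{Lem:Jensen}); the factor inequalities~\eqref{Ineq:E-EP2} are then applied to $w$ itself in each variable. You instead use the exact \emph{conditional} decomposition $\mathcal E_{\gamma_1\otimes\gamma_2}[w]=\iomutwo{\mathcal E_{\gamma_1}[w(\cdot,x_2)]}+\mathcal E_{\gamma_2}[F]$, apply~\eqref{Ineq:E-EP2} for $i=2$ to the marginal $F$, and then push the Fisher term back to $w$ via the joint convexity of $(u,v)\mapsto\varphi''(u)\,|v|^2$. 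Both arguments feed the hypothesis $(1/\varphi'')''\le0$ into a convexity check, but on different objects: the paper on the two-variable map $F_t(x,y)=t\,\varphi(y)+(1-t)\,\varphi(x)-\varphi(t\,y+(1-t)\,x)$, you on $g(u,v)=\varphi''(u)\,|v|^2$. Your route is shorter and avoids the two auxiliary lemmas; the paper's route singles out the entropy sub-additivity (Proposition~\ref{Prop:TensPhi}) as a result of independent interest. One small remark: checking only the $2\times2$ principal minors of $\mathrm{Hess}\,g$ does not by itself yield positive semidefiniteness; the clean way is the Schur complement, which gives exactly $\varphi''\,\varphi^{(iv)}\ge2\,(\varphi''')^2$.
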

It is straightforward to notice that the Fisher information is additive
\[
\mathcal I_{\gamma_1\otimes\gamma_2}[w]=\iomutwo{\mathcal I{\gamma_1}[w]}+\iomuone{\mathcal I{\gamma_2}[w]}\,,
\]
so that the proof of Theorem~\ref{Thm:TensPhi} can be reduced to the proof of a \emph{sub-additivity} property of the $\varphi$-entropies that goes as follows.
\begin{proposition}\label{Prop:TensPhi} Assume that $\varphi$ satisfies~\eqref{Ass:Convexity} and consider two probability measures $d\gamma_1$ and $d\gamma_2$ on $\R^{d_1}\times\R^{d_2}$. Then for any $w\in\mathrm L^1(\R^{d_1}\times\R^{d_2},d\gamma_1\otimes\gamma_2)$, we have
\[
\mathcal E_{\gamma_1\otimes\gamma_2}[w]\le\iomutwo{\mathcal E_{\gamma_1}[w]}+\iomuone{\mathcal E_{\gamma_2}[w]}\quad\forall\,w\in\mathrm L^1(d\gamma_1\otimes\gamma_2)\,.
\]
\end{proposition}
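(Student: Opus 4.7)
The plan is to reduce sub-additivity to a single one-variable fact: the convexity of the functional $w\mapsto\mathcal E_\gamma[w]$ on nonnegative integrable functions. Under assumption~\eqref{Ass:Convexity} (in particular $(1/\varphi'')''\le 0$), this convexity property is the one ingredient alluded to just before the statement; the remainder of the argument is a conditional-expectation/Jensen manipulation.

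\emph{Step 1: an exact decomposition.} For $w\in\mathrm L^1(d\gamma_1\otimes\gamma_2)$, introduce the partial average
\[
\bar w(x_1):=\iomutwo{w(x_1,\cdot)}\,,
\]
and observe that $\iomuone{\bar w}=\iomuonetwo{w}$. Adding and subtracting $\iomuone{\varphi(\bar w)}$, one obtains the identity
\begin{align*}
\mathcal E_{\gamma_1\otimes\gamma_2}[w]&=\iomuonetwo{\varphi(w)}-\varphi\!\left(\iomuonetwo{w}\right)\\
&=\iomuone{\Bigl(\iomutwo{\varphi(w)}-\varphi(\bar w)\Bigr)}+\Bigl(\iomuone{\varphi(\bar w)}-\varphi\!\left(\iomuone{\bar w}\right)\Bigr)\\
&=\iomuone{\mathcal E_{\gamma_2}[w(x_1,\cdot)]}+\mathcal E_{\gamma_1}[\bar w]\,.
\end{align*}

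\emph{Step 2: convexity of $\mathcal E_{\gamma_1}$ plus Jensen.} The assumption $(1/\varphi'')''\le 0$ is precisely what ensures that the functional $u\mapsto\mathcal E_{\gamma_1}[u]$ is convex on the cone of nonnegative integrable functions (this is the classical Latała-Oleszkiewicz convexity property, to be invoked from the convexity part of Section~\ref{Sec:Tensorization}). Writing $\bar w$ as the $\gamma_2$-average of the family $\{w(\cdot,x_2)\}_{x_2\in\R^{d_2}}$, Jensen's inequality applied to the convex functional $\mathcal E_{\gamma_1}$ gives
\[
\mathcal E_{\gamma_1}[\bar w]=\mathcal E_{\gamma_1}\!\left[\iomutwo{w(\cdot,x_2)}\right]\le\iomutwo{\mathcal E_{\gamma_1}[w(\cdot,x_2)]}\,.
\]

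\emph{Step 3: conclusion.} Inserting this estimate into the decomposition of Step~1 yields
\[
\mathcal E_{\gamma_1\otimes\gamma_2}[w]\le\iomuone{\mathcal E_{\gamma_2}[w]}+\iomutwo{\mathcal E_{\gamma_1}[w]}\,,
\]
which is the desired sub-additivity. One may then proceed symmetrically in the two variables (a density argument covers the general $\mathrm L^1$ case by adding a small positive constant and passing to the limit).

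The routine parts are Steps~1 and~3, which are mere bookkeeping. The only nontrivial ingredient is the convexity of $\mathcal E_{\gamma_1}$ on nonnegative functions. That is the main obstacle: proving it reduces to showing that the map $\lambda\mapsto\mathcal E_{\gamma_1}[(1-\lambda)w_0+\lambda w_1]$ has nonnegative second derivative, which after differentiation amounts to the Cauchy-Schwarz-type inequality
\[
\varphi''\!\left(\iomuone{w}\right)\left(\iomuone{\delta w}\right)^{\!2}\le\iomuone{\varphi''(w)\,(\delta w)^2}
\]
for $\delta w=w_1-w_0$, which in turn is equivalent to the concavity of $1/\varphi''$ via an $\mathrm L^2$ duality argument (writing $(\delta w)^2$ as $\varphi''(w)^{-1}\cdot\varphi''(w)\,(\delta w)^2$ and using Jensen on the concave function $1/\varphi''$ against the probability measure $\varphi''(w)\,(\delta w)^2\,d\gamma_1/\text{mass}$). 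This is the only place in the proof where the full strength of~\eqref{Ass:Convexity} is used.
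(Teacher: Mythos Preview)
Your proof is correct and follows essentially the same route as the paper: both arguments reduce sub-additivity to the Jensen-type inequality $\mathcal E_{\gamma_1}\!\left[\iomutwo{w}\right]\le\iomutwo{\mathcal E_{\gamma_1}[w]}$, which is a consequence of the convexity of $u\mapsto\mathcal E_{\gamma_1}[u]$ (the paper's Lemma~\ref{Lem:ConvPhi}); your Step~1 decomposition is exactly the algebraic rearrangement the paper performs after applying Lemma~\ref{Lem:Jensen}. The only substantive difference is in how the convexity of $\mathcal E_\gamma$ itself is justified: the paper proves it by showing that the two-variable function $F_t(x,y)=t\,\varphi(y)+(1-t)\,\varphi(x)-\varphi(x_t)$ has nonnegative Hessian, whereas your sketch computes the second variation directly and reduces it to $\varphi''(\bar w)\bigl(\int\delta w\bigr)^2\le\int\varphi''(w)\,(\delta w)^2$, which indeed follows from Cauchy--Schwarz combined with Jensen for the concave function $1/\varphi''$ against $d\gamma_1$ (your description of the auxiliary measure in that last step is slightly garbled, but the underlying argument is sound).
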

This last result relies on convexity properties that we are now going to study. As a preliminary step, we establish an inequality of Jensen type.
\begin{lemma}\label{Lem:Jensen} Let $w\in\mathrm L^1(\R^{d_1}\times\R^{d_2},d\gamma_1\otimes\gamma_2)$ be a function of two variables $(x_1,x_2)\in\R^{d_1}\times\R^{d_2}$. If $\mathcal F_{\gamma_1}$ is a convex functional on $\mathrm L^1(d\gamma_1)$ such that
\be{Eqn:ii}
\frac d{dt}{\int_{\R^{d_2}}\mathcal F_{\gamma_1}\left[t\,w+(1-t)\,\textstyle{\int_{\R^{d_2}}}\,w\,d{\gamma_2}\right]\,d{\gamma_2}}_{|t=0}\kern -5pt=0\,,
\ee
then the following inequality holds:
\[
\int_{\R^{d_2}} \mathcal F_{\gamma_1}[w]\,d\gamma_2\ge\mathcal F_{\gamma_1}\left[\int_{\R^{d_2}}w\,d\gamma_2\right]\,.
\]
\end{lemma}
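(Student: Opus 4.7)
The plan is to reduce the claim to a one-variable convexity argument. Set $\bar w(x_1):=\int_{\R^{d_2}}w(x_1,x_2)\,d\gamma_2$ and define, for $t\in[0,1]$,
$$\Phi(t):=\int_{\R^{d_2}}\mathcal F_{\gamma_1}\bigl[t\,w+(1-t)\,\bar w\bigr]\,d\gamma_2.$$
Since $\bar w$ is independent of $x_2$ and $d\gamma_2$ is a probability measure, one has $\Phi(0)=\mathcal F_{\gamma_1}[\bar w]$, while $\Phi(1)=\int_{\R^{d_2}}\mathcal F_{\gamma_1}[w]\,d\gamma_2$. Thus the lemma is equivalent to showing $\Phi(1)\ge\Phi(0)$, which is the goal of the rest of the argument.

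First I would verify that $\Phi$ is convex on $[0,1]$. The argument of $\mathcal F_{\gamma_1}$ in the definition of $\Phi$ is an affine function of $t$ taking values in $\mathrm L^1(d\gamma_1)$, so convexity of $\mathcal F_{\gamma_1}$ gives, for each fixed $x_2$, convexity of $t\mapsto\mathcal F_{\gamma_1}\bigl[t\,w+(1-t)\,\bar w\bigr]$. Integration against $d\gamma_2$ preserves convexity, so $\Phi:[0,1]\to\R$ is convex.

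The key step then invokes hypothesis~\eqref{Eqn:ii}, which states precisely that $\Phi'(0)=0$. For a convex function on $[0,1]$, a vanishing right-derivative at the left endpoint forces $\Phi$ to attain its minimum at $t=0$; indeed, the slopes $(\Phi(t)-\Phi(0))/t$ are nondecreasing in $t>0$ and their limit as $t\to0^+$ is $0$, so they are nonnegative for every $t\in(0,1]$. Applying this at $t=1$ yields $\Phi(1)\ge\Phi(0)$, which is the asserted inequality.

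The main technical point, rather than a genuine obstacle, is justifying the interchange of differentiation and integration implicit in~\eqref{Eqn:ii}, and checking that $t\mapsto\mathcal F_{\gamma_1}[t w+(1-t)\bar w]$ admits the one-sided derivative at $0$ needed to apply the convex monotonicity of slopes. Both are routine under the $\mathrm L^1(d\gamma_1\otimes\gamma_2)$ hypothesis on $w$ together with the convexity of $\mathcal F_{\gamma_1}$; in the applications we have in mind ($\mathcal F_{\gamma_1}=\mathcal E_{\gamma_1}\circ\varphi$ of the form used in Proposition~\ref{Prop:TensPhi}) these regularity properties hold by direct inspection.
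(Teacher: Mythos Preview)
Your proof is correct and follows essentially the same route as the paper: both interpolate affinely between $w$ and $\bar w=\int_{\R^{d_2}}w\,d\gamma_2$, use convexity of $\mathcal F_{\gamma_1}$ along this path, and combine it with the hypothesis $\Phi'(0)=0$ to conclude $\Phi(1)\ge\Phi(0)$. The paper writes the convexity inequality pointwise in $x_2$, divides by $t$, and integrates at the end; you package the same argument by introducing $\Phi$ at the outset and invoking monotonicity of slopes, but the underlying idea is identical.
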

\begin{proof} Let $w_t=t\,w+(1-t)\iomutwo w$. By convexity of $\mathcal F_{\gamma_1}$,
\[
\mathcal F_{\gamma_1}[w_t]\le t\,\mathcal F_{\gamma_1}[w]+(1-t)\,\mathcal F_{\gamma_1}\left[\iomutwo w\right]\,.
\]
Hence it follows that
\[
\mathcal F_{\gamma_1}[w_t]-\mathcal F_{\gamma_1}\left[\iomutwo w\right]\le t\,\(\mathcal F_{\gamma_1}[w]-\mathcal F_{\gamma_1}\left[\iomutwo w\right]\)\,,
\]
from which we deduce that
\[
0=\frac d{dt}\mathcal F_{\gamma_1}[w_t]_{|t=0}\le\mathcal F_{\gamma_1}[w]-\mathcal F_{\gamma_1}\left[\iomutwo w\right]\,.
\]
Conclusion holds after integrating with respect to $\gamma_2$. \end{proof}

The second observation is the proof of the convexity of $w\mapsto\mathcal E_\gamma[w]$. The following result is taken from~\cite{MR1796718}.
\begin{lemma}\label{Lem:ConvPhi} If $\varphi$ satisfies~\eqref{Ass:Convexity}, then $\mathcal E_\gamma$ is convex. \end{lemma}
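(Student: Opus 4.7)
The plan is to prove convexity by showing the second variation of $\mathcal E_\gamma$ is nonnegative along every line segment. For $w>0$ bounded away from $0$ and a bounded perturbation $u$, I would set $F(t):=\mathcal E_\gamma[w+t\,u]$ and compute
\[
F''(0)=\idgb{\varphi''(w)\,u^2}-\varphi''(\bar w)\,\bar u^2,
\]
where $\bar w:=\idgb{w}$ and $\bar u:=\idgb{u}$, using that $d\gamma$ is a probability measure. Since the computation is pointwise in $t$, convexity will follow if one shows that the two bracketed terms satisfy the inequality
\[
\varphi''(\bar w)\,\bar u^2\le\idgb{\varphi''(w)\,u^2}
\]
for every admissible pair $(w,u)$; applying it to $w+t\,u$ in place of $w$ gives $F''(t)\ge0$ for all $t$, hence convexity of $F$ on a suitable interval, hence convexity of $\mathcal E_\gamma$ along the segment joining any two admissible functions.

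To obtain this pointwise inequality, I would combine two ingredients. First, write $u=\sqrt{\varphi''(w)}\,u\cdot 1/\sqrt{\varphi''(w)}$ and apply the Cauchy--Schwarz inequality with respect to $d\gamma$:
\[
\bar u^{\,2}=\(\idgb{u}\)^{\!2}\le\idgb{\varphi''(w)\,u^2}\cdot\idgb{\frac1{\varphi''(w)}}.
\]
Second, since $1/\varphi''$ is concave on $(0,+\infty)$ by Assumption~\eqref{Ass:Convexity}, Jensen's inequality applied to this concave function yields
\[
\idgb{\frac1{\varphi''(w)}}\le\frac1{\varphi''(\bar w)}.
\]
Multiplying the two inequalities and rearranging gives exactly $\varphi''(\bar w)\,\bar u^2\le\idgb{\varphi''(w)\,u^2}$, and hence $F''(0)\ge0$.

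Finally, to conclude convexity on the cone of nonnegative functions $w\in\mathrm L^1(d\gamma)$ at which $\mathcal E_\gamma$ is finite, I would argue by density: replace $w$ by $w+\varepsilon$ and $u$ by a truncated $u_n$ to justify the differentiation under the integral, pass to the limit $n\to\infty$, $\varepsilon\to0_+$ using the monotone/dominated convergence theorem and the fact that $\varphi''\ge0$, and use continuity of $\mathcal E_\gamma$ along segments. The main technical obstacle is the regularity step, because $\varphi''$ may blow up at $0$ (as in $\varphi_p$ with $p<2$), so the integrals $\idgb{1/\varphi''(w)}$ need not be finite globally; however only the combined inequality $\varphi''(\bar w)\bar u^2\le\idgb{\varphi''(w)u^2}$ is needed at the end, and it extends to the limit by lower semicontinuity of the right-hand side and continuity of the left-hand side under the approximation $w\mapsto w+\varepsilon$, followed by $\varepsilon\to0_+$.
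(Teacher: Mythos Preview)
Your argument is correct and is a genuinely different route from the paper's. The paper does not differentiate along segments; instead it fixes $t\in(0,1)$ and shows that the two-variable function
\[
F_t(x,y):=t\,\varphi(y)+(1-t)\,\varphi(x)-\varphi\big(t\,y+(1-t)\,x\big)
\]
is jointly convex on $(0,+\infty)^2$ by checking that its Hessian is positive semidefinite (the determinant condition reduces exactly to the concavity of $1/\varphi''$). Then a single application of Jensen's inequality to $F_t$ gives
\[
t\,\mathcal E_\gamma[w_1]+(1-t)\,\mathcal E_\gamma[w_0]-\mathcal E_\gamma[t\,w_1+(1-t)\,w_0]=\idg{F_t(w_1,w_0)}-F_t\(\idg{w_1},\idg{w_0}\)\ge0
\]
directly, for any $w_0,w_1\ge0$ with finite entropy, without differentiation.

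What your approach buys is transparency: the Cauchy--Schwarz\,/\,Jensen combination makes the role of the concavity of $1/\varphi''$ very visible, and the inequality $\varphi''(\bar w)\,\bar u^2\le\idg{\varphi''(w)\,u^2}$ is of independent interest. What the paper's approach buys is robustness: because it never introduces the auxiliary quantity $\idg{1/\varphi''(w)}$, it needs no approximation or density step and applies at once to all nonnegative $w_0,w_1$, including those touching $0$. Your density argument is fine, but it is exactly the place where care is needed; the paper's route sidesteps that issue entirely.
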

\begin{proof} We give a two steps proof of this result, for completeness.
\\
$\bullet$ Define $x_t=t\,y+(1-t)\,x$, $t\in(0,1)$. Since $1/\varphi''$ is concave,
\be{Concavity}
\frac 1{\varphi''(x_t)}\ge\frac t{\varphi''(y)}+\frac{1-t}{\varphi''(x)}\,.
\ee
The function $\varphi$ is convex, hence $\varphi''(x)>0$ and $\varphi''(y)>0$ and so
\[
\frac 1{\varphi''(x_t)}\ge\frac t{\varphi''(y)}\quad\mbox{and}\quad\frac 1{\varphi''(x_t)}\ge\frac{1-t}{\varphi''(x)}\,.
\]
This means
\[
\varphi''(y)\ge t\,\varphi''(x_t)\quad\mbox{and}\quad\varphi''(x)\ge(1-t)\,\varphi''(x_t)\,.
\]
We can also rewrite \eqref{Concavity} as
\[
\varphi''(x)\,\varphi''(y)\ge(t\,\varphi''(x)+(1-t)\,\varphi''(y))\,\varphi''(x_t)\,.
\]
Consider the function
\[
F_t(x,y):=t\,\varphi(y)+(1-t)\,\varphi(x)-\varphi(x_t)
\]
and observe that
\[
\mbox{\rm Hess}(F_t)=\(\begin{array}{cc}(1-t)\,\varphi''(x)-(1-t)^2\,\varphi''(x_t)\quad&-\,t\,(1-t)\,\varphi''(x_t)\cr-\,
t\,(1-t)\,\varphi''(x_t)&t\,\varphi''(y)-t^2\,\varphi''(x_t)\end{array}\)
\]
is nonnegative since both diagonal terms are nonnegative and the determinant is nonnegative. The matrix $\mbox{\rm Hess}(F_t)$ is therefore nonnegative and $F_t$ is convex.
\\
$\bullet$ We observe that
\begin{multline*}
t\,\mathcal E_\gamma[w_1]+(1-t)\,\mathcal E_\gamma[w_0]-\mathcal E_\gamma[t\,w_1+(1-t)\,w_0]\\
=\idg{F_t(w_1,w_0)}-F_t\(\idg{w_1},\idg{w_0}\)
\end{multline*}
is nonnegative by Jensen's inequality, which proves the result.
\end{proof}

\noindent{\bf Proof of Proposition~\ref{Prop:TensPhi}.} We claim that $\mathcal F_{\gamma_1}=\mathcal E_{\gamma_1}$ satisfies~\eqref{Eqn:ii}. Indeed, let us consider $w_t=t\,w+(1-t)\,w_0$ with $w_0:=\iomutwo w$. A simple computation shows that
\[
\frac d{dt} \mathcal F_{\gamma_1}[w_t]=\iomuone{\varphi'(w_t)\,\(w-w_0\)}-\varphi'\(\iomuone{w_t}\)\iomuone{\(w-w_0\)}\,,
\]
and, as a consequence at $t=0$,
\[
\frac d{dt} \mathcal F_{\gamma_1}[w_t]_{|t=0}=\iomuone{\varphi'(w_0)\,\(w-w_0\)}-\varphi'\(\iomuone{w_0}\)\iomuone{\(w-w_0\)}\,.
\]
Since $w_0$ does not depend on $x_2$, an integration with respect to $\gamma_2$ concludes the proof of~\eqref{Eqn:ii}. From Lemma~\ref{Lem:Jensen}, we get
\[
\int_{\R^{d_2}} \mathcal E_{\gamma_1}[w]\,d\gamma_2\ge\mathcal E_{\gamma_1}\left[\int_{\R^{d_2}}w\,d\gamma_2\right]\,.
\]
By definition of $\mathcal E_{\gamma_1}$, this means
\begin{multline*}
\iomutwo{\left[\iomuone{\varphi(w)}-\varphi\(\iomuone w\)\right]}\\
\ge\iomuone{\varphi\(\iomutwo w\)}-\varphi\(\iomuonetwo w\)\,,
\end{multline*}
from which we deduce
\begin{multline*}
\iomutwo{\left[\iomuone{\varphi(w)}-\varphi\(\iomuone w\)\right]}\\+\iomuone{\left[\iomutwo{\varphi(w)}-\varphi\(\iomutwo w\)\right]}\\
\ge\iomuonetwo{\varphi\(w\)}-\varphi\(\iomuonetwo w\)\,.
\end{multline*}
This ends the proof of Proposition~\ref{Prop:TensPhi}. \finprf

\noindent {\bf Proof of Theorem~\ref{Thm:TensPhi}.} The proof is an easy consequence of Proposition~\ref{Prop:TensPhi} and of the observation that
\begin{multline*}
\min\{\Lambda_1,\Lambda_2\}\,\mathcal E_{\gamma_1\otimes\gamma_2}[w]\\
\le\Lambda_1\iomutwo{\mathcal E_{\gamma_1}[w]}+\Lambda_2\iomuone{\mathcal E_{\gamma_2}[w]}\hspace*{2cm}\\\hspace*{2cm}
\le\iomuonetwo{\varphi''(w)\,\big[\,|\nabla_{x_1}w|^2+|\nabla_{x_2}w|^2\,\big]}\\
\le\iomuonetwo{\varphi''(w)\,|\nabla w|^2}=\mathcal I_{\gamma_1\otimes\gamma_2}[w]\,.
\end{multline*}
\finprf

As a concluding remark, we observe that tensorization is not limited to probability measures on $\R^d$. The main interest of such an approach when dealing with $\R^d$ is that it is enough to establish the inequality when $d=1$. In the case $d=1$, sharp criteria can be found in~\cite{MR2052235} (also see~\cite{MR2320410}). There are many related issues that can be traced back to the work of Muckenhoupt, \emph{e.g.},~\cite{MR0311856} and Hardy (see~\cite{MR944909}).

\subsection{Entropy -- entropy production inequalities: perturbation results}\label{Sec:HS}

Perturbing the measure in the case of a Poincar\'e inequality is essentially trivial. In the case of the logarithmic Sobolev inequality, this has been done by Holley and Stroock in~\cite{MR893137}. More general entropy functionals have been considered in~\cite{MR1801751}, which cover all $\varphi$-entropies. Also see~\cite{Gentil00,MR2081075}.

Assume that for some probability measure $d\gamma$ and for some $\Lambda>0$, Inequality~\eqref{Ineq:E-EP} holds, that is,
\be{IneqConvLogSob}
\Lambda\left[\idg{\varphi(w)}-\varphi(\overline w)\right]\le\idg{\varphi''(w)|\nabla w|^2}\quad\forall\,w\in\mathrm H^1(d\gamma)\,.
\ee
Here we denote by $\overline w$ the average of $w$ with respect to $d\gamma$: $\overline w:=\idg w$. Assume that $d\mu$ is a measure which is absolutely continuous with respect to $d\gamma$ and such that
\[
e^{-b}\,d\gamma\le d\mu\le e^{-a}\,d\gamma
\]
for some constants $a$, $b\in\R$. The statement below generalizes the one of Lemma~5.2 of~\cite{blanchet:46}.
\begin{lemma}\label{Lem:Holley-Stroock} Under the above assumption, if $\varphi$ is a $C^2$ function such that $\varphi''>0$, then
\[
e^{a-b}\,\Lambda\int_{\R^d}\big[\varphi(w)-\varphi(\widetilde w)-\varphi'(\widetilde w)(w-\widetilde w)\big]\,d\mu\le\int_{\R^d}\varphi''(w)\,|\nabla w|^2\,d\mu\quad\forall\,w\in\mathrm H^1(d\mu)\,,
\]
where $\widetilde w:=\int_{\R^d}w\,d\mu\,/\int_{\R^d}d\mu$.\end{lemma}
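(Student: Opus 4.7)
The plan is to combine the classical Holley--Stroock trick with the variational characterization of the relative $\varphi$-entropy. Observe first that the left-hand side can be rewritten purely in terms of $\varphi(w)$ and $\widetilde w$: since $\int(w-\widetilde w)\,d\mu=0$ by definition of $\widetilde w$, the linear term drops and
\[
\int_{\R^d}\big[\varphi(w)-\varphi(\widetilde w)-\varphi'(\widetilde w)(w-\widetilde w)\big]\,d\mu=\int_{\R^d}\varphi(w)\,d\mu-\varphi(\widetilde w)\int_{\R^d}d\mu.
\]
The key variational fact is that $\widetilde w$ is precisely the \emph{minimizer} in $c\in\R$ of $c\mapsto\int[\varphi(w)-\varphi(c)-\varphi'(c)(w-c)]\,d\mu$: differentiating in $c$ yields $-\varphi''(c)\int(w-c)\,d\mu$, which vanishes exactly when $c=\widetilde w$, and the second derivative at that point is $\varphi''(\widetilde w)\int d\mu>0$. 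Hence, for \emph{any} constant $c$,
\[
\int_{\R^d}\big[\varphi(w)-\varphi(\widetilde w)-\varphi'(\widetilde w)(w-\widetilde w)\big]\,d\mu\le\int_{\R^d}\big[\varphi(w)-\varphi(c)-\varphi'(c)(w-c)\big]\,d\mu.
\]

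The natural choice is $c=\overline w:=\idg w$, the $\gamma$-mean. With this choice, the integrand $\varphi(w)-\varphi(\overline w)-\varphi'(\overline w)(w-\overline w)$ is pointwise \emph{nonnegative} by convexity of $\varphi$. I can therefore transport the integration from $\mu$ to $\gamma$ by the upper sandwich bound $d\mu\le e^{-a}d\gamma$:
\[
\int_{\R^d}\big[\varphi(w)-\varphi(\overline w)-\varphi'(\overline w)(w-\overline w)\big]\,d\mu\le e^{-a}\idg{\big[\varphi(w)-\varphi(\overline w)-\varphi'(\overline w)(w-\overline w)\big]}.
\]
Since $\gamma$ is a probability measure with mean $\overline w$, the linear term integrates to zero and the right-hand side equals $e^{-a}\mathcal E_\gamma[w]$.

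Next I apply the entropy--entropy production inequality~\eqref{IneqConvLogSob} assumed for $\gamma$:
\[
e^{-a}\mathcal E_\gamma[w]\le\frac{e^{-a}}{\Lambda}\idg{\varphi''(w)\,|\nabla w|^2}.
\]
Finally, using the lower sandwich bound $d\gamma\le e^{b}\,d\mu$ and the pointwise nonnegativity of $\varphi''(w)|\nabla w|^2$ (recall $\varphi''>0$), I transport back to $\mu$:
\[
\idg{\varphi''(w)\,|\nabla w|^2}\le e^{b}\int_{\R^d}\varphi''(w)\,|\nabla w|^2\,d\mu.
\]
Chaining the four inequalities and multiplying through by $e^{a-b}\Lambda$ yields exactly the claim.

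There is no real obstacle here: the argument is a direct generalization of Holley--Stroock, and the only substantive observation is the variational identity characterizing $\widetilde w$, which allows the critical replacement $c=\widetilde w\rightsquigarrow c=\overline w$ while only losing an inequality in the favorable direction. Everything else is the tautological transport of pointwise-nonnegative integrands through the two sandwich bounds, which is precisely where the factor $e^{a-b}$ appears.
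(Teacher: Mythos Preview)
Your proof is correct and follows essentially the same route as the paper's: transport the Fisher information from $\mu$ to $\gamma$ via $d\gamma\le e^b\,d\mu$, apply~\eqref{IneqConvLogSob}, then transport the (pointwise nonnegative) relative entropy integrand back via $d\mu\le e^{-a}\,d\gamma$, and handle the mismatch between $\widetilde w$ and $\overline w$ by convexity. The only cosmetic difference is that you package the last step as the variational characterization ``$\widetilde w$ minimizes $c\mapsto\int[\varphi(w)-\varphi(c)-\varphi'(c)(w-c)]\,d\mu$'', whereas the paper writes it directly as the tangent-line inequality $\varphi(\overline w)+\varphi'(\overline w)(\widetilde w-\overline w)\le\varphi(\widetilde w)$; these are of course the same inequality.
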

\begin{proof} We start by observing that
\begin{multline*}
e^b\int_{\R^d}\varphi''(w)|\nabla w|^2\,d\mu\ge\idg{\varphi''(w)|\nabla w|^2}=\mathcal I_\gamma[w]\\
\hspace*{1cm}\ge\Lambda\,\mathcal E_\gamma[w]=\Lambda\left[\idg{\varphi(w)}-\varphi(\overline w)\right]\\
=\Lambda\idg{\(\varphi(w)-\varphi(\overline w)-\varphi'(\overline w)\,(w-\overline w)\)}\,.
\end{multline*}
By convexity of $\varphi$, we know that $\varphi(w)-\varphi(\overline w)-\varphi'(\overline w)\,(w-\overline w)\ge0$, so that
\begin{multline*}
\Lambda\,\mathcal E_\gamma[w]\ge\Lambda\,e^a\int_{\R^d}\(\varphi(w)-\varphi(\overline w)-\varphi'(\overline w)\,(w-\overline w)\)d\mu\\
=\Lambda\,e^a\int_{\R^d}\(\varphi(w)-\varphi(\overline w)-\varphi'(\overline w)\,(\widetilde w-\overline w)\)d\mu\,.
\end{multline*}
By convexity of $\varphi$ again, $\varphi(\overline w)+\varphi'(\overline w)\,(\widetilde w-\overline w)\le\varphi(\widetilde w)$, which shows that
\[
\Lambda\,\mathcal E_\gamma[w]\ge\Lambda\,e^a\int_{\R^d}\(\varphi(w)-\varphi(\widetilde w)\)d\mu=e^a\,\Lambda\int_{\R^d}\big[\varphi(w)-\varphi(\widetilde w)-\varphi'(\widetilde w)(w-\widetilde w)\big]\,d\mu
\]
and completes the proof. \end{proof}

\subsection{Entropy -- entropy production inequalities and linear flows}\label{Sec:EEP}

Let us consider the counterpart of the \emph{Ornstein-Uhlenbeck equation}~\eqref{Eqn:OU} on a smooth convex bounded domain $\Omega$
\be{Eqn:OUbded}
\frac{\partial w}{\partial t}=\mathsf L\,w:=\Delta w-\nabla\potential\cdot\nabla w\,,
\ee
supplemented with homogenous Neumann boundary conditions
\[
\nabla w\cdot\nu=0\quad\mbox{on}\quad\partial\Omega\,,
\]
where $\nu$ denotes a unit outward pointing normal vector orthogonal to $\partial\Omega$. Let us consider the measure $d\gamma=\(\int_\Omega e^{-\potential}\,dx\)^{-1}e^{-\potential}\,dx$. If $w$ solves~\eqref{Eqn:OUbded} with a nonnegative initial datum $w_0$ such that $\idgb{w_0}=1$, then mass is conserved so that $\idgb{w(t,\cdot)}=1$ for any $t\ge0$ and converges to $1$ as $t\to+\infty$. The next question is how to measure the rate of convergence using the $\varphi$-entropy. For simplicity, let us assume that $\varphi=\varphi_p$ for some $p\in[1,2]$. An answer is given by the formal computation of Section~\ref{Sec:Intro}, adapted to the bounded domain $\Omega$. Because of the boundary condition, it is straightforward to check that
\[
\frac d{dt}\idgb{\frac{w^p-1}{p-1}}=-\frac4p\idgb{|\nabla w^{p/2}|^2}
\]
if $p>1$ and a similar results holds when $p=1$. Hence, if for some $\Lambda>0$ we can prove that
\be{Ineq:E-EPb}
\idgb{\frac{w^p-1}{p-1}}\le\frac4{p\,\Lambda}\idgb{|\nabla w^{p/2}|^2}\quad\mbox{for any $w$ such that}\quad\idgb w=1\,,
\ee
then we can conclude that $\idgb{\frac{w^p-1}{p-1}}$ decays like $e^{-\Lambda\,t}$. The main idea of the Bakry-Emery method, or \emph{carr\'e du champ} method, as it is exposed in~\cite{Bakry-Emery85} is that~\eqref{Ineq:E-EPb} can be established using the flow itself, by computing $\frac d{dt}\idgb{|\nabla z|^2}$ with $z:=w^{p/2}$. Let us sketch the main steps of the proof.

As a preliminary observation, we notice that $\mathsf L$ is self-adjoint in $\mathrm L^2(\Omega,d\gamma)$ in the sense that
\[
\idgb{w_1\,(\mathsf L\,w_2)}=-\idgb{\nabla w_1\cdot\nabla w_2}=\idgb{(\mathsf L\,w_1)\,w_2}
\]
and also that
\[
[\nabla,\,\mathsf L]=-\,\mathrm{Hess}\,\potential\,.
\]
Using $w=z^{2/p}$ we deduce from~\eqref{Eqn:OUbded} that
\be{Eqn:z}
\frac{\partial z}{\partial t}=\mathsf L\,z+\frac{2-p}p\,\frac{|\nabla z|^2}z\,.
\ee
We adopt the convention that $a\cdot b=\sum_{i=1}^da_i\,b_i$ if $a=(a_i)_{1\le i\le d}$ and $b=(b_i)_{1\le i\le d}$ are two vectors with values in $\R^d$. If $m=(m_{i,j})_{1\le i,j\le d}$ and $n=(n_{i,j})_{1\le i,j\le d}$ are two matrices, then $m:n=\sum_{i,j=1}^dm_{i,j}\,n_{i,j}$. Also $a\otimes b$ denotes the matrix $(a_i\,b_j)_{1\le i,j\le d}$. We shall use $|a|^2=a\cdot a$ and $\|m\|^2=m:m$ for vectors and matrices respectively. With these notations, let us use~\eqref{Eqn:z} to compute
\begin{eqnarray*}
&&\hspace*{-8pt}\frac12\,\frac d{dt}\idgb{|\nabla z|^2}=\idgb{\nabla z\cdot\nabla\(\mathsf L\,z+\frac{2-p}p\,\frac{|\nabla z|^2}z\)}\\
&=&\idgb{\nabla z\cdot\(\mathsf L\,\nabla z-\,\mathrm{Hess}\,\potential\,\nabla z\)}+\,\frac{2-p}p\idgb{\nabla z\cdot\(2\,\mathrm{Hess}\,z\,\frac{\nabla z}z-\frac{|\nabla z|^2}z\,\nabla z\)}\\
&=&-\idgb{\big\|\mathrm{Hess}\,z\big\|^2}-\idgb{\mathrm{Hess}\,\potential:\nabla z\otimes\nabla z}+\int_{\partial\Omega}\mathrm{Hess}\,z:\nabla z\otimes\nu\,e^{-\potential}\,d\sigma\\\\
&&+\,2\,\frac{2-p}p\idgb{\mathrm{Hess}\,z:\frac{\nabla z\otimes\nabla z}z}-\,\frac{2-p}p\idgb{\left\|\frac{\nabla z\otimes\nabla z}z\right\|^2}\\
&=&-\frac2p\,(p-1)\idgb{\big\|\mathrm{Hess}\,z\big\|^2}-\idgb{\mathrm{Hess}\,\potential:\nabla z\otimes\nabla z}\\
&&-\,\frac{2-p}p\idgb{\left\|\mathrm{Hess}\,z-\frac{\nabla z\otimes\nabla z}z\right\|^2}+\int_{\partial\Omega}\mathrm{Hess}\,z:\nabla z\otimes\nu\,e^{-\potential}\,d\sigma\,.
\end{eqnarray*}
Here $d\sigma$ denotes the surface measure induced by Lebesgue's measure on $\partial\Omega$. We learn from Grisvard's lemma, see for instance Lemma~5.1 in~\cite{GTS} or~\cite{MR775683}, that $\int_{\partial\Omega}\mathrm{Hess}\,z:\nabla z\otimes\nu\,e^{-\potential}\,d\sigma$ is nonpositive as soon as $\Omega$ is convex and $\nabla z\cdot\nu=0$ on $\partial\Omega$. As soon as we know that either
\[
\mathrm{Hess}\,\potential\ge\Lambda_\star\,\mathrm{Id}
\]
for some $\Lambda_\star>0$, or the inequality
\[
\frac2p\,(p-1)\idgb{|\nabla X|^2}+\idgb{\mathrm{Hess}\,\potential:X\otimes X}\ge\Lambda(p)\idgb{|X|^2}\quad\forall\,X\in\mathrm H^1(\Omega,d\gamma)^d
\]
holds for some $\Lambda(p)>0$, which is a weaker assumption for any $p>1$, then we obtain that
\[
\frac d{dt}\idgb{|\nabla z|^2}\le-\,2\,\Lambda(p)\idgb{|\nabla z|^2}\,.
\]
Of course we know that $\Lambda(p)\ge\Lambda_\star$. By convention, we take $\Lambda(1)=\Lambda_\star$.
\begin{proposition}\label{Prop:BE} Assume that $p\in[1,2]$, $\varphi=\varphi_p$ and, with the above notations, $\Lambda(p)>0$. If $\Omega$ is a smooth convex bounded domain in $\R^d$, then~\eqref{Ineq:E-EPb} holds with $\Lambda=2\,\Lambda(p)$.\end{proposition}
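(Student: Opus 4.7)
The plan is to close the Bakry--Emery argument already essentially carried out above the statement: the entire computation of $\tfrac{d}{dt}\idgb{|\nabla z|^2}$ is done, so the proof reduces to packaging the two derived identities into a functional inequality by integration along the flow. Given $w \in \mathrm H^1(\Omega,d\gamma)$ with $\idgb w=1$, I would first regularize (smooth, bounded away from $0$) and solve~\eqref{Eqn:OUbded} with $w_0=w$ under homogeneous Neumann boundary conditions, then pass to the limit at the end by density. Standard parabolic theory gives a smooth positive solution with $\idgb{w(t,\cdot)}=1$ for all $t\ge0$, and~$z(t):=w(t)^{p/2}$ satisfies $\nabla z\cdot\nu=0$ on $\partial\Omega$ so that Grisvard's lemma applies as cited.

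Set
\[
\mathcal E(t):=\idgb{\tfrac{w^p-1}{p-1}},\qquad \mathcal I(t):=\tfrac{4}{p}\idgb{|\nabla z|^2}.
\]
The flow gives $\tfrac{d\mathcal E}{dt}=-\mathcal I$ (shown in the paragraph preceding the proposition), and the computation carried out just above the statement, combined with Grisvard's lemma and the assumption $\Lambda(p)>0$, yields
\[
\frac{d\mathcal I}{dt}\le-\,2\,\Lambda(p)\,\mathcal I(t)\qquad\forall\,t\ge0.
\]
Gr\"onwall therefore gives $\mathcal I(t)\le\mathcal I(0)\,e^{-2\Lambda(p)\,t}$, and integrating $\tfrac{d\mathcal E}{dt}=-\mathcal I$ from $0$ to $+\infty$ produces
\[
\mathcal E(0)-\mathcal E(\infty)=\int_0^\infty\mathcal I(s)\,ds\le\frac{\mathcal I(0)}{2\,\Lambda(p)}.
\]
Once one knows $\mathcal E(\infty)=0$, this rearranges as $\mathcal E(0)\le\tfrac{1}{2\Lambda(p)}\mathcal I(0)$, i.e.~\eqref{Ineq:E-EPb} with $\Lambda=2\,\Lambda(p)$; since $w_0=w$ was an arbitrary (smooth, normalized) representative, density finishes the proof.

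The main technical obstacle is the justification that $\mathcal E(\infty)=0$, since a priori the monotonicity of~$\mathcal E$ only yields a nonnegative limit. My plan here is to use the exponential decay of $\mathcal I$: it implies $\idgb{|\nabla z(t)|^2}\to0$, so by the Poincar\'e inequality on the convex bounded domain $\Omega$ with weight $d\gamma$, $z(t)$ converges in $\mathrm H^1(\Omega,d\gamma)$ to a constant, and mass conservation ($\idgb{w(t)}=1$) forces that constant to be $1$, whence $\mathcal E(t)\to0$. A secondary, purely technical point is the case $p=1$: the formulas $\varphi_p(w)=(w^p-1)/(p-1)$ and $w^{p/2}$ must be read in the limit sense $\varphi_1(w)=w\log w-(w-1)$ and $z=\sqrt w$ (with the usual $4\idgb{|\nabla\sqrt w|^2}=\idgb{|\nabla w|^2/w}$), and $\Lambda(1)=\Lambda_\star$ by convention; all steps above go through verbatim under this convention. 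The remaining boundary and regularization issues are standard for parabolic Neumann problems on convex domains and do not affect the structure of the argument.
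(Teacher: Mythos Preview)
Your proposal is correct and follows essentially the same Bakry--Emery strategy as the paper. The only cosmetic difference is in how the two differential inequalities are combined: the paper observes directly that $t\mapsto \tfrac{1}{\Lambda}\mathcal I(t)-\mathcal E(t)$ is nonincreasing and tends to $0$, whereas you first apply Gr\"onwall to $\mathcal I$ and then integrate $\mathcal E'=-\mathcal I$; both routes use the same ingredients and the same large-time limits (for which you in fact give more justification than the paper does).
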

\begin{proof} It is straightforward. In view of the above computations, we know that
\[
\frac d{dt}\(\frac4{p\,\Lambda}\idgb{|\nabla w^{p/2}|^2}-\idgb{\frac{w^p-1}{p-1}}\)\le0
\]
and $\lim_{t\to+\infty}\idgb{\frac{w^p-1}{p-1}}=\lim_{t\to+\infty}\idgb{|\nabla w^{p/2}|^2}=0$. This is enough to conclude that, for any $t\ge0$,
\[
\frac4{p\,\Lambda}\idgb{|\nabla w^{p/2}|^2}-\idgb{\frac{w^p-1}{p-1}}\ge0\,.
\]
\end{proof}

We conclude this section with the unbounded case $\Omega=\R^d$. For any given $p\in[1,2]$, let us assume that the inequality
\[
\frac2p\,(p-1)\kern -3pt\idg{|\nabla X|^2}+\idg{\mathrm{Hess}\,\potential:X\otimes X}\ge\Lambda(p)\kern -3pt\idg{|X|^2}\quad\forall\,X\in\mathrm H^1(\R^d,d\gamma)^d
\]
holds for some $\Lambda(p)>0$. For $p>1$, this assumption is a spectral gap condition on a vector valued Schr\"odinger operator: see for instance~\cite{DNS} for further details. With this assumption in hand, we have the following functional inequality, which interpolates between the logarithmic Sobolev inequality and the Poincar\'e inequality.
\begin{corollary}\label{Cor:BE} Assume that $q\in[1,2)$ and let us consider the probability measure $d\gamma=e^{-\potential}\,dx$ on $\R^d$. Then with $\Lambda=\Lambda(2/q)$, we have
\be{Ineq:InterpGaussian}
\frac{\nrmg f2^2-\nrmg fq^2}{2-q}\le\frac1\Lambda\idg{|\nabla f|^2}\quad\forall\,f\in\mathrm H^1(\R^d,d\gamma)\,.
\ee\end{corollary}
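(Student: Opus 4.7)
The plan is to derive~\eqref{Ineq:InterpGaussian} directly from the entropy -- entropy production inequality~\eqref{Ineq:E-EPb} by the substitution $p=2/q$ and $f=w^{p/2}$, on the whole space $\R^d$. The version of~\eqref{Ineq:E-EPb} needed on $\R^d$ is the one already sketched in the paragraph preceding the corollary: one runs exactly the computation of $\tfrac12\tfrac{d}{dt}\idg{|\nabla z|^2}$ that precedes Proposition~\ref{Prop:BE}, but on $\R^d$ the boundary integral $\int_{\partial\Omega}\mathrm{Hess}\,z:\nabla z\otimes\nu\,e^{-\potential}\,d\sigma$ is simply absent, so no convexity assumption on the domain is required. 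Combined with the spectral gap condition assumed in the statement (namely $\Lambda(p)>0$), the same argument as in Proposition~\ref{Prop:BE} yields~\eqref{Ineq:E-EPb} on $\R^d$ with constant $\Lambda=2\,\Lambda(p)$, for all $p\in(1,2]$ (and by the convention $\Lambda(1)=\Lambda_\star$, also for $p=1$).

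\textbf{Substitution.} Now fix $q\in[1,2)$ and set $p=2/q\in(1,2]$. For a nonnegative $w\in\mathrm H^1(\R^d,d\gamma)$ with $\idg{w}=1$, inequality~\eqref{Ineq:E-EPb} on $\R^d$ reads
\[
\idg{\frac{w^p-1}{p-1}}\le\frac{4}{p\,\Lambda}\idg{|\nabla w^{p/2}|^2}\,,\qquad\Lambda=2\,\Lambda(p)\,.
\]
Define $f:=w^{p/2}=w^{1/q}$, so that $w=f^q$, $w^p=f^2$, and the normalization becomes $\|f\|_{\mathrm L^q(d\gamma)}^q=1$. Using $p-1=(2-q)/q$ and $4/(p\,\Lambda)=4/(p\cdot 2\,\Lambda(p))=q/\Lambda(2/q)$, the inequality above rewrites as
\[
\frac{q}{2-q}\bigl(\nrmg{f}{2}^2-1\bigr)\le\frac{q}{\Lambda(2/q)}\idg{|\nabla f|^2}\,,
\]
that is, since $\nrmg{f}{q}^2=1$,
\[
\frac{\nrmg{f}{2}^2-\nrmg{f}{q}^2}{2-q}\le\frac{1}{\Lambda(2/q)}\idg{|\nabla f|^2}\,.
\]

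\textbf{Homogeneity and sign.} Both sides of~\eqref{Ineq:InterpGaussian} are homogeneous of degree $2$ in $f$, so replacing $f$ by $f/\nrmg{f}{q}$ removes the normalization and extends the inequality to all nonnegative $f\in\mathrm H^1(\R^d,d\gamma)$ with $\nrmg{f}{q}>0$. To treat general $f\in\mathrm H^1(\R^d,d\gamma)$, one observes that both sides depend only on $|f|$, and that $|f|\in\mathrm H^1(\R^d,d\gamma)$ with $|\nabla|f||=|\nabla f|$ almost everywhere, so the inequality for $|f|$ implies the inequality for $f$.

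\textbf{Expected obstacle.} The arithmetic of the substitution is routine; the only nontrivial point is justifying the $\R^d$ version of~\eqref{Ineq:E-EPb}, i.e.\ controlling the decay of $z=w^{p/2}$ and its derivatives at infinity so that the integration by parts used in the \emph{carr\'e du champ} computation closes without a boundary contribution. This is standard for solutions of~\eqref{Eqn:OUbded} on $\R^d$ with Gaussian-type $d\gamma$ and sufficiently nice initial data; the inequality is then extended to the full space $\mathrm H^1(\R^d,d\gamma)$ by density. The boundary cases $q=1$ (which is the Poincar\'e inequality, $p=2$) and $q\to 2^-$ (logarithmic Sobolev, $p\to 1^+$) are recovered either directly from~\eqref{Ineq:E-EPb} or by passing to the limit in~\eqref{Ineq:InterpGaussian}.
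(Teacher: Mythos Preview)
Your algebraic reduction is exactly the paper's: set $p=2/q$, write $f=w^{p/2}$, use homogeneity to drop the normalization, and read off~\eqref{Ineq:InterpGaussian} from~\eqref{Ineq:E-EPb}. The only difference is how you pass to $\R^d$. The paper does not redo the \emph{carr\'e du champ} computation on the whole space; instead it invokes Proposition~\ref{Prop:BE} on bounded convex domains verbatim and then approximates $\R^d$ by a growing sequence of such domains. Your route---performing the integrations by parts directly on $\R^d$ and arguing that the ``boundary term at infinity'' vanishes---is also standard and correct, but it shifts the technical burden from Grisvard's lemma to decay estimates for solutions of~\eqref{Eqn:OU}, which you rightly flag as the only nontrivial point. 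The paper's choice has the small advantage of reusing Proposition~\ref{Prop:BE} without reopening the computation; yours has the advantage of making clear that no convexity of the domain is needed once one works on $\R^d$.
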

\begin{proof} By homogeneity, we know from Proposition~\ref{Prop:BE} that
\[
\idgb{\frac{w^p-\overline w^p}{p-1}}\le\frac2{p\,\Lambda(p)}\idgb{|\nabla w^{p/2}|^2}
\]
for all $w$ such that $f=w^{p/2}$. Here we take $p=2/q$. The conclusion holds by approximating $\R^d$ by a growing sequence of bounded convex domains.\end{proof}

An equivalent form of~\eqref{Ineq:InterpGaussian} is
\be{Ineq:InterpGaussian2}
\mathcal I[w]\ge\Lambda\,\mathcal E[w]\quad\forall\,w\in\mathrm H^1(\R^d,d\gamma)\;\mbox{such that}\;\idg w=1
\ee
with the notation of Section~\ref{Sec:Intro}, $\varphi=\varphi_p$ and $p=2/q\in[1,2]$.

\begin{remark}\label{Rem:OptGaussian} The optimality of the constant $\Lambda=1$ in~\eqref{Ineq:InterpGaussian} is easy to obtain when $\potential(x)=\frac12\,|x|^2$. With $q=1$,~\eqref{Ineq:InterpGaussian} is the Gaussian Poincar\'e inequality
\[
\nrmg{f-\bar f}2^2\le\idg{|\nabla f|^2}\quad\forall\,f\in\mathrm H^1(\R^d,d\gamma)\quad\mbox{with}\quad\bar f=\idg f\,,
\]
with equality if $f=f_1$, $f_1(x)=x_1$. By taking the limit as $q\to2_-$ in~\eqref{Ineq:InterpGaussian}, we recover Gross' logarithmic Sobolev inequality
\[
\idg{f^2\,\log\(\frac{f^2}{\nrmg f2^2}\)}\le2\idg{|\nabla f|^2}\quad\forall\,f\in\mathrm H^1(\R^d,d\gamma)\,.
\]
For any $q\in[1,2)$, the equality case in~\eqref{Ineq:InterpGaussian} with $\Lambda=1$ is achieved by considering $1+\varepsilon\,f_1$ as a test function in the limit as $\varepsilon\to0$.

From the point of view of the evolution equation, it is easy to see that the equality in~\eqref{Ineq:E-EPb} is achieved asymptotically as $t\to+\infty$ by taking $w=u/u_\star$ where~$u$ is the solution of~\eqref{Eqn:FP} given by
\[
u(t,x)=u_\star\(x-x_\star(t)\)
\]
with $x_\star(t)=x_0\,e^{-t}$ for any fixed $x_0\in\R^d$.
\end{remark}

\subsection{Improved entropy -- entropy production inequalities}\label{Sec:iEEP}

In the proof of Proposition~\ref{Prop:BE}, the term $\idg{\left\|\mathrm{Hess}\,z-\nabla z\otimes\nabla z/z\right\|^2}$ has been dropped. In some cases, one can recombine the other terms differently and obtain an improved inequality if $q\in(1,2)$. See~\cite{MR2152502} (and also~\cite{ABD} for a spectral point of view or~\cite{DEKL} in the case of the sphere). The boundary term $\int_{\partial\Omega}\mathrm{Hess}\,z:\nabla z\otimes\nu\,e^{-\potential}\,d\sigma$ may also be of importance, as it is suggested in nonlinear problems by~\cite{1407}.

Let us give an example of an improvement, based on~\cite{MR2152502}, in the special case $\potential(x)=|x|^2/2$. Using $\mathrm{Hess}\,\potential=\mathrm{Id}$, after approximating $\R^d$ by bounded domains, we obtain that
\begin{multline*}
\frac12\,\frac d{dt}\idg{|\nabla z|^2}+\idg{|\nabla z|^2}\le-\idg{\left\|\mathrm{Hess}\,z-\,\frac{2-p}p\,\frac{\nabla z\otimes\nabla z}z\right\|^2}\\
-\,\frac2p\,\kappa_p\idg{\frac{|\nabla z|^4}{z^2}}
\end{multline*}
with $\kappa_p=(p-1)\,(2-p)/p$. A simple Cauchy-Schwarz inequality shows that
\[
\(\idg{|\nabla z|^2}\)^2\le\idg{\frac{|\nabla z|^4}{z^2}}\idg{z^2}\,.
\]
With the notations of Section~\ref{Sec:Intro}, we have $\idg{z^2}=\idg{w^p}=1+(p-1)\,\mathcal E[w]$ and $\idg{|\nabla z|^2}=\frac p4\,\mathcal I[w]$ so that
\[
\frac12\,\frac d{dt}\idg{|\nabla z|^2}+\idg{|\nabla z|^2}\le-\,\frac2p\,\kappa_p\,\frac{\(\idg{|\nabla z|^2}\)^2}{\idg{|z|^2}}
\]
can be rewritten as
\be{ImprovedEDO}
\frac d{dt}\mathcal I[w]+2\,\mathcal I[w]\le-\,\kappa_p\,\frac{\mathcal I[w]^2}{1+(p-1)\,\mathcal E[w]}\,.
\ee
We recall that we consider here the case $\varphi=\varphi_p$, $p\in(1,2)$, so that $\kappa_p$ is positive and we can take advantage of~\eqref{ImprovedEDO} to obtain an improved version of Corollary~\ref{Cor:BE}. The following result follows the scheme of Theorem~2 in~\cite{MR2152502}.
\begin{proposition}\label{Prop:AD} Assume that $q\in(1,2)$ and let us consider the Gaussian probability measure $d\gamma=(2\pi)^{-d/2}\,e^{-|x|^2/2}\,dx$. Then there exists a strictly convex function~$F$ on $\R^+$ such that $F(0)=0$ and $F'(0)=1$, for which
\[
\frac1q\,F\(q\,\frac{\nrmg f2^2-1}{2-q}\)\le\nrmg{\nabla f}2^2
\]
for any $f\in\mathrm H^1(\R^d,d\gamma)$ such that $\nrmg fq=1$.\end{proposition}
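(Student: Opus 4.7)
\medskip

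\noindent\textbf{Proof proposal.} The plan is to exploit the differential inequality~\eqref{ImprovedEDO} along the Ornstein--Uhlenbeck flow~\eqref{Eqn:OU} with $\potential(x)=|x|^2/2$ and $\varphi=\varphi_p$ for $p:=2/q\in(1,2)$, and to read off an improved entropy -- entropy production inequality at the initial time. Pick a smooth positive initial datum $w_0$ with $\idg{w_0}=1$ and let $w(t,\cdot)$ denote the corresponding solution of~\eqref{Eqn:OU}. Writing $e(t):=\mathcal E[w(t,\cdot)]$ and $i(t):=\mathcal I[w(t,\cdot)]$, one has $e'(t)=-i(t)$ with $e(t),\,i(t)\to0$ as $t\to+\infty$, so one can reparametrize $i$ as $Z(y):=i(t)$ when $e(t)=y$, obtaining $i'(t)=-Z'(y)\,Z(y)$. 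Dividing~\eqref{ImprovedEDO} by $Z(y)>0$ converts it into
\[
Z'(y)\ge 2+\kappa_p\,\frac{Z(y)}{1+(p-1)\,y}\,,\qquad Z(0)=0\,.
\]

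Next I would introduce $G$ as the unique solution of the companion equality
\[
G'(y)=2+\kappa_p\,\frac{G(y)}{1+(p-1)\,y}\,,\qquad G(0)=0\,.
\]
Using that $\kappa_p/(p-1)=(2-p)/p=q-1$, the integrating factor $\mu(y):=(1+(p-1)\,y)^{1-q}$ yields the closed form
\[
G(y)=\frac{2}{(p-1)(2-q)}\,\Big[(1+(p-1)\,y)-(1+(p-1)\,y)^{q-1}\Big]\,.
\]
Applying Gronwall to $h:=Z-G$, which satisfies $h'(y)\ge\kappa_p\,h(y)/(1+(p-1)\,y)$ with $h(0)=0$, gives $(h\mu)'\ge0$ and hence $h\mu\ge0$, i.e.\ $Z(y)\ge G(y)$ on $[0,e(0)]$. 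Evaluating at $y=e(0)$ yields $\mathcal I[w_0]\ge G(\mathcal E[w_0])$, and a density argument extends the bound to all admissible $w\in\mathrm H^1(\R^d,d\gamma)$.

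To close, substitute $f:=w^{p/2}$: then $\idg{f^q}=\idg w=1$, the identity $q(p-1)/(2-q)=1$ yields $q(\nrmg f2^2-1)/(2-q)=\mathcal E[w]$, and $\nrmg{\nabla f}2^2=(p/4)\,\mathcal I[w]$. Setting $F:=G/2$, the inequality $\mathcal I[w]\ge G(\mathcal E[w])$ rewrites, using $(p/4)\cdot 2=p/2=1/q$, as
\[
\frac1q\,F\Big(q\,\frac{\nrmg f2^2-1}{2-q}\Big)\le\nrmg{\nabla f}2^2\,.
\]
The explicit formula yields $F(0)=0$, $F'(0)=1$ and
\[
F''(y)=(q-1)(p-1)(1+(p-1)\,y)^{q-3}>0\,,
\]
so $F$ is strictly convex on $\R^+$, as required.

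The main obstacle is making the formal manipulations rigorous: one needs~\eqref{ImprovedEDO} to hold along the flow for the class of $w_0$ in the density argument, which in practice requires approximating $\R^d$ by bounded convex domains and controlling the boundary terms dropped in the derivation of~\eqref{ImprovedEDO}, together with verifying that $e(t),\,i(t)\to 0$ at $t=+\infty$ so that the Gronwall comparison can indeed be started at $y=0$. Everything else reduces to the scalar comparison principle and the explicit integration of the companion ODE.
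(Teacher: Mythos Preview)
Your argument is correct and follows the same strategy as the paper: run the Ornstein--Uhlenbeck flow, use the differential inequality~\eqref{ImprovedEDO}, compare with the solution of the companion ODE, and read off the inequality at $t=0$. The only substantive difference is that the paper first weakens~\eqref{ImprovedEDO} by replacing the denominator $1+(p-1)\,\mathsf e$ by $1+\mathsf e$ (legitimate since $p-1<1$), which leads to the simpler explicit function $F(s)=\tfrac1{1-\kappa_p}\big[1+s-(1+s)^{\kappa_p}\big]$; you keep the sharper denominator and obtain the tighter function $F(y)=\tfrac1{(p-1)(2-q)}\big[(1+(p-1)y)-(1+(p-1)y)^{q-1}\big]$. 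Your reparametrization $Z(y)=i(t)$ when $e(t)=y$ together with Gronwall is equivalent to the paper's integrating-factor identity $\tfrac d{dt}\big((\mathsf e'+2F(\mathsf e))/(1+\mathsf e)^{\kappa_p}\big)\ge0$: both are the same comparison principle written in different variables. In short, your proof is a mild sharpening of the paper's, at the cost of a slightly less compact formula for~$F$.
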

\begin{proof} The proof follows the strategy of~\cite{MR2152502}. Let $\mathsf e(t):=\frac1{p-1}\(\idg{z^2}-1\)$ where $z=w^{p/2}$ solves~\eqref{Eqn:z} with initial datum $f$. We deduce from~\eqref{ImprovedEDO} that
\[
\mathsf e''+2\,\mathsf e'\ge\frac{\kappa_p\,|\mathsf e'|^2}{1+(p-1)\,\mathsf e}\ge\frac{\kappa_p\,|\mathsf e'|^2}{1+\mathsf e}\,.
\]
The function $F(s):=\frac1{1-\kappa_p}\,\big[1+s-(1+s)^{\kappa_p}\big]$ solves $F'=1+\kappa_p\,\frac F{1+s}$ and we can check that~\eqref{ImprovedEDO} is equivalent to
\[
\frac d{dt}\(\frac{\mathsf e'+2\,F\big(\mathsf e\big)}{\big(1+\mathsf e\big)^{\kappa_p}}\)\ge0\,.
\]
Since $\lim_{t\to+\infty}\(\mathsf e'(t)+2\,F\big(\mathsf e(t)\big)\)=0$, we have shown that $\mathsf e'+2\,F\big(\mathsf e\big)\le0$ for any $t\ge0$. This is true in particular at $t=0$, with $z(t=0,\cdot)=f$.\end{proof}

{}From the point of view of entropy -- production of entropy inequalities, we have obtained that
\[
\mathcal I[w]\ge2\,F\(\mathcal E[w]\)
\]
where $F$ is a strictly convex function such that $F(0)=0$ and $F'(0)=1$. Using the homogeneity and substituting $f/\nrmg fq$ to $f$, similar estimates have been used in~\cite{MR2152502} to prove that
\[\label{Ineq:InterpGaussian3}
\tfrac2{(2-q)^2}\left[\nrmg f2^2-\nrmg fq^{2(2-q)}\,\nrmg f2^{2(q-1)}\right]\le\nrmg{\nabla f}2^2\;\forall\,f\in\mathrm H^1(\R^d,d\gamma)\,.
\]

\subsection{Interpolation inequalities: comments and extensions}\label{Sec:IP}

The inequality of Corollary~\ref{Cor:BE} appears in many papers. It is proved for the first time by the \emph{carr\'e du champ} method and any $q\in[1,2]$ in~\cite{Bakry-Emery85} in the case of a compact manifold, but special cases were known long before. For instance the case $q=2$ corresponding to the logarithmic Sobolev inequality can be traced back to~\cite{Gross75,Federbush} (also see~\cite{MR479373,MR3493423} for related issues) but was already known as the Blachmann-Stam inequality~\cite{MR0109101}: see~\cite{Villani2008,MR3255069} for a more detailed historical account. The case $q=1$ when $\potential(x)=\frac12\,|x|^2$ is known as the Gaussian Poincar\'e inequality. It appears for instance in~\cite{Nash58} but was probably known much earlier in the framework of the theory of Hermite functions. In the case $q\in(1,2)$ when $\potential(x)=\frac12\,|x|^2$, we may refer to~\cite{MR954373} for a proof based on spectral methods, which has been extended in~\cite{ABD} to more general potentials.

One of the technical limitations of the \emph{carr\'e du champ} method is the difficulty of controlling the boundary terms in the various integrations by parts. In the above proof, we used Grisvard's lemma for convex domains. Alternative methods, which will not be exposed here, rely on the properties of Green's functions, or use direct spectral estimates.

\medskip Let us list some possible extensions:\\
$\bullet$ In Corollary~\ref{Cor:BE}, for any given $q\in[1,2]$, we need that $\Lambda(p)$ is positive only for $p=2/q$. The condition for $p=1$, which is equivalent to $\mathrm{Hess}\,\potential\ge\Lambda(1)\,\mathrm{Id}$ with $\Lambda(1)>0$, is not required unless $q=2$. For any $q<2$, the positivity condition of $\Lambda(2/q)$ is a nonlocal condition, which allows $\potential$ to be a non-uniformly strictly convex potential: see~\cite{DNS} for details.\\
$\bullet$ The case of unbounded convex domains can be considered. Reciprocally, according to~\cite{Arnold-Markowich-Toscani-Unterreiter01}, the case of a bounded convex domain $\Omega$ can be deduced from the Euclidean case, by approximating a function $\potential$ which takes the value $+\infty$ on $\Omega^c$ by smooth locally bounded potentials.\\
$\bullet$ Spectral methods can be used to establish that the family of inequalities of Corollary~\ref{Cor:BE} interpolates between the logarithmic Sobolev inequality and the Poincar\'e inequality: this approach has been made precise in~\cite{MR954373,MR1796718}, with extensions in~\cite{Bartier-Dolbeault,ABD}.\\
$\bullet$ Exhibiting a whole family of Lyapunov functionals for the same evolution equation needs an explanation that has been given in~\cite{DNS2,MR3023408}: to each entropy, we associate a notion of distance such that the equation appears as the gradient flow of the entropy.

\medskip In the context of linear diffusions and Markov processes, $\varphi$-entropies are very natural objects which put the Gibbs entropy and the quadratic form associated to the Poincar\'e inequality in a common framework. It is therefore evident to ask the same question in a kinetic framework involving a degenerate diffusion operator coupled to a transport operator. Much less has been done so far and the next section is a contribution to the issue of optimal rates of convergence measured by $\varphi_p$-entropies, with a special emphasis on $p\neq1$, $2$.

\section{Sharp rates for the kinetic Fokker-Planck equation}\label{Sec:kFPsharp}

In this section, our goal is to provide a computation of the sharp exponential rate in Proposition~\ref{Prop:AE} and establish the improvement of Theorem~\ref{Thm:Main} by generalizing the estimate of Proposition~\ref{Prop:AD} to the kinetic setting. The method follows the strategy of Section~3 of~\cite{MR2275692} in case $p=2$, which is sometimes referred to as the \emph{$\mathrm H^1$ hypocoercivity method of C.~Villani}. This method is also known to cover the case $p=1$. We extend it to any $p\in[1,2]$ and compute the precise algebraic expressions, which allows us to identify the sharp rate. Similar computations have been done in~\cite{2014arXiv1409.5425A,MR3468699,MR3557714,2017arXiv170204168E,Monmarche2018}. According to~\cite{MR2042214} (see earlier references therein), the Green function associated with~\eqref{VFP1} is a Gaussian kernel, so that integrations by parts can be performed on $\R^d\times\R^d$ without any special precaution. 

\subsection{\texorpdfstring{$\mathrm H^1$}{H1} hypocoercive estimates}

Using the notation of Section~\ref{Sec:Intro}, our strategy is to consider the solution $h=g^{p/2}$ of~\eqref{VFP3}, where $g=f/f_\star$, define
\[
\mathcal J[h]:=\iri{|\nabla_vh|^2}+2\,\lambda\iri{\nabla_vh\cdot\nabla_xh}+\munu\iri{|\nabla_xh|^2}
\]
and adjust the parameters $\lambda$ and $\munu$ in order to maximize $\lambda_\star=\lambda_\star(\lambda,\munu)>0$ so that
\[
\frac d{dt}\mathcal J[h(t,\cdot,\cdot)]\le-\,\lambda_\star(\lambda,\munu)\,\mathcal J[h(t,\cdot,\cdot)]\,.
\]
Since~\eqref{VFP2} is linear and preserves positivity, we recall that we can assume that $g$ is nonnegative and such that $\nnrmu g1=1$. Let us define the notations:
\begin{eqnarray*}
&&\H{vv}=\(\frac{\partial^2h}{\partial v_i\,\partial v_j}\)_{1\le i,j\le d}\,,\quad\H{xv}=\(\frac{\partial^2h}{\partial x_i\,\partial v_j}\)_{1\le i,j\le d}\,,\\
&&\M{vv}=\(\frac{\partial\sqrt h}{\partial v_i}\,\frac{\partial\sqrt h}{\partial v_j}\)_{1\le i,j\le d}\,,\quad\M{xv}=\(\frac{\partial\sqrt h}{\partial x_i}\,\frac{\partial\sqrt h}{\partial v_j}\)_{1\le i,j\le d}\,.
\end{eqnarray*}
We start by observing that, up to a few integrations by parts, we obtain the identities
\begin{multline}\label{Est1}
\tfrac12\,\frac d{dt}\iri{|\nabla_vh|^2}\\
=-\iri{\nabla_vh\cdot\nabla_v(v\cdot\nabla_xh-x\cdot\nabla_vh)}+\iri{\nabla_vh\cdot\nabla_v(\Delta_vh-v\cdot\nabla_vh)}\\
+\big(\tfrac2p-1\big)\iri{\nabla_vh\cdot\nabla_v\(\frac{|\nabla_vh|^2}h\)}\\
=-\iri{\nabla_vh\cdot\nabla_xh}-\(\iri{\|\H{vv}\|^2}+\iri{|\nabla_vh|^2}\)\\
+\kappa\iri{\(\H{vv}:\M{vv}-2\,\|\M{vv}\|^2\)}
\end{multline}
with $\kappa=8\,(2-p)/p$,
\begin{multline}\label{Est2}
\tfrac12\,\frac d{dt}\iri{|\nabla_xh|^2}\\
=-\iri{\nabla_xh\cdot\nabla_x(v\cdot\nabla_xh-x\cdot\nabla_vh)}+\iri{\nabla_xh\cdot\nabla_x(\Delta_vh-v\cdot\nabla_vh)}\\
+\big(\tfrac2p-1\big)\iri{\nabla_xh\cdot\nabla_x\(\frac{|\nabla_vh|^2}h\)}\\
=\iri{\nabla_vh\cdot\nabla_xh}-\iri{\|\H{xv}\|^2}+\kappa\iri{\(\H{xv}:\M{xv}-2\,\|\M{xv}\|^2\)}\,,
\end{multline}
and
\begin{multline}\label{Est3}
\frac d{dt}\iri{\nabla_vh\cdot\nabla_xh}=\iri{|\nabla_vh|^2}-\iri{|\nabla_xh|^2}-\iri{\nabla_vh\cdot\nabla_xh}\\
-2\iri{\H{vv}:\H{xv}}\\
+\kappa\iri{\(\H{vv}:\M{xv}+\H{xv}:\M{vv}-4\,\M{vv}:\M{xv}\)}\,.
\end{multline}
Collecting these estimates shows that
\begin{multline*}
-\tfrac12\,\frac d{dt}\mathcal J[h(t,\cdot,\cdot)]\\
=-\tfrac12\,\frac d{dt}\(\iri{|\nabla_vh|^2}+2\,\lambda\iri{\nabla_vh\cdot\nabla_xh}+\munu\iri{|\nabla_xh|^2}\)\\
=(1-\lambda)\iri{|\nabla_vh|^2}+\(1+\lambda-\munu\)\iri{\nabla_vh\cdot\nabla_xh}+\lambda\iri{|\nabla_xh|^2}\\
+\iri{\|\H{vv}\|^2}-\kappa\iri{\(\H{vv}:\M{vv}-2\,\|\M{vv}\|^2\)}\\
+\,2\,\lambda\iri{\H{vv}:\H{xv}}-\kappa\,\lambda\iri{\(\H{vv}:\M{xv}+\H{xv}:\M{vv}-4\,\M{vv}:\M{xv}\)}\\
+\munu\iri{\|\H{xv}\|^2}-\kappa\,\munu\iri{\(\H{xv}:\M{xv}-2\,\|\M{xv}\|^2\)}
\end{multline*}
where $\kappa=8\,(2-p)/p$. This can be rewritten as
\[
-\tfrac12\,\frac d{dt}\iri{X^\perp\cdot\mathfrak M_0\,X}=\iri{X^\perp\cdot\mathfrak M_1\,X}+\iri{Y^\perp\cdot\mathfrak M_2\,Y}
\]
where
\[
\mathfrak M_0=\(
\begin{array}{cc}
1&\lambda\\
\lambda&\munu\\
\end{array}
\)\otimes\,\mathrm{Id}_{\R^d}\,,\quad
\mathfrak M_1=\(
\begin{array}{cc}
1-\lambda&\frac{1+\lambda-\munu}2\\
\frac{1+\lambda-\munu}2&\lambda\\
\end{array}
\)\otimes\,\mathrm{Id}_{\R^d}
\]
and
\[
\mathfrak M_2=\(
\begin{array}{cccc}
1&\lambda&-\frac\kappa2&-\frac{\kappa\,\lambda}2\\
\lambda&\munu&-\frac{\kappa\,\lambda}2&-\frac{\kappa\,\munu}2\\
-\frac\kappa2&-\frac{\kappa\,\lambda}2&2\,\kappa&2\,\kappa\,\lambda\\
-\frac{\kappa\,\lambda}2&-\frac{\kappa\,\munu}2&2\,\kappa\,\lambda&2\,\kappa
\munu\\
\end{array}
\)\otimes\,\mathrm{Id}_{\R^d\times\R^d}
\]
are bloc-matrix valued functions of $(\lambda,\munu)$, and
\[
X=\(\nabla_vh,\nabla_xh\)\,,\quad Y=\(\H{vv},\H{xv},\M{vv},\M{xv}\)\,.
\]
The problem is reduced to a problem of linear algebra, namely to maximize
\[
\lambda_\star(\lambda,\munu):=\min_{X\in\R^{2d}}\frac{X^\perp\cdot\mathfrak M_1(\lambda,\munu)\,X}{X^\perp\cdot\mathfrak M_0(\lambda,\munu)\,X}
\]
on the set of parameters $(\lambda,\munu)\in\R^2$ such that
\[
\min_{Y\in\R^{2d}\times\R^{2d}}\frac{Y^\perp\cdot\mathfrak M_2\,Y}{\|Y\|^2}\ge0\,.
\]Here $X$ and $Y$ now arbitrary vectors and matrices respectively in $\R^{2d}$ and $\R^{2d}\times\R^{2d}$. Elementary computations show that $\lambda$ and $\munu$ must satisfy the condition $\lambda^2\le\munu$ and also that $\lambda_\star(\lambda,\munu)$ achieves its maximum at $(\lambda,\munu)=(\frac12,1)$, so that $\lambda_\star(\frac12,1)=\frac12$. For $(\lambda,\munu)=(\frac12,1)$, $\mathfrak M_1(\frac12,1)=\frac12\,\mathfrak M_0(\frac12,1)$ and the eigenvalues of $\mathfrak M_2(\frac12,1)$ are given as a function of $\kappa=8\,(2-p)/p$ by
\[
\lambda_1(\kappa):=\frac14\(2\,\kappa+1-\,\sqrt{5\,\kappa^2-4\,\kappa+1}\)\,,\;\lambda_2(\kappa):=\frac34\(2\,\kappa+1-\,\sqrt{5\,\kappa^2-4\,\kappa+1}\)\,,
\]
\[
\lambda_3(\kappa):=\frac14\(2\,\kappa+1+\sqrt{5\,\kappa^2-4\,\kappa+1}\)\,,\quad\lambda_4(\kappa):=\frac34\(2\,\kappa+1+\sqrt{5\,\kappa^2-4\,\kappa+1}\)\,.
\]
In the range $p\in[1,2]$, which means $\kappa\in[0,8]$, they are all nonnegative: see Fig.~\ref{Fig:F1}. Since $\lambda_1(\kappa)$ is the lowest eigenvalue, we have proved the following result.
\begin{figure}[ht]
\begin{center}
\includegraphics[width=10cm]{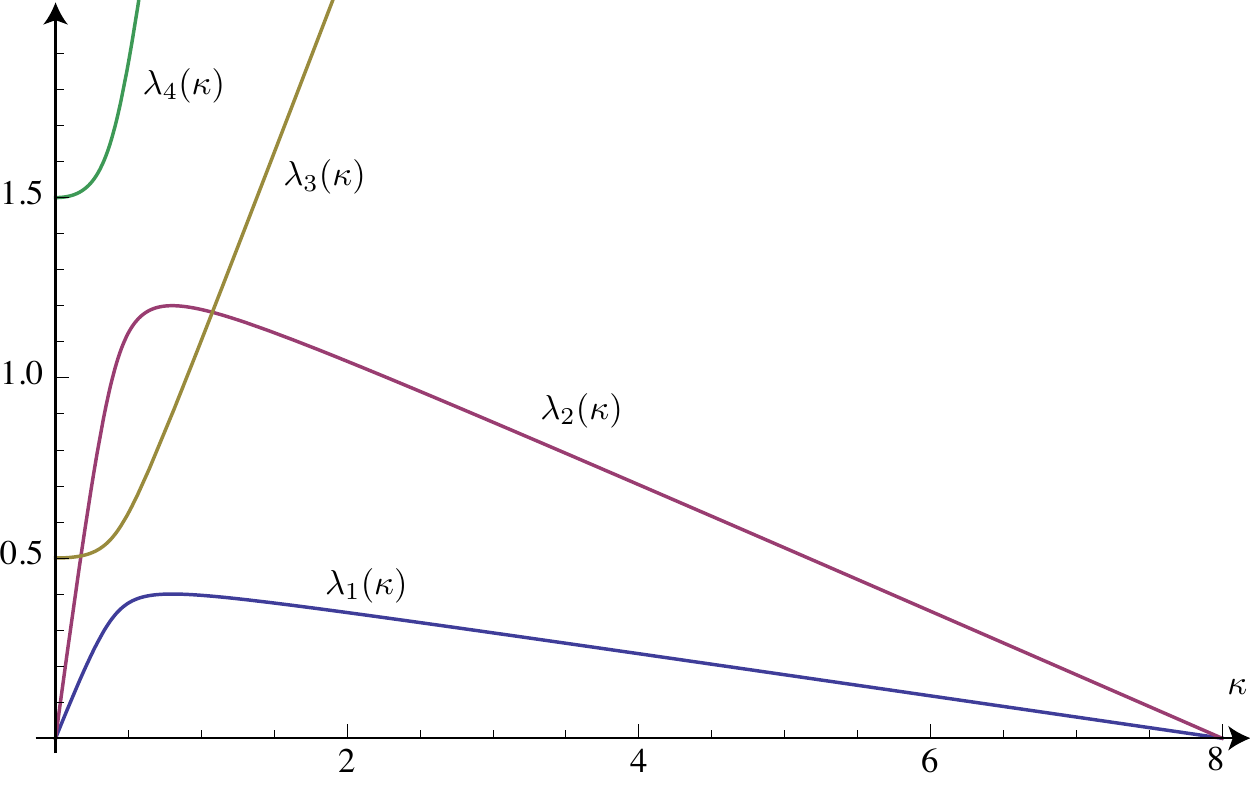}
\caption{\label{Fig:F1} Plot of the eigenvalues of $\mathfrak M_2(\frac12,1)$ as a function of $\kappa$.}
\end{center}
\end{figure}
\begin{lemma}\label{Lem:VFPalg} With the above notations and $(\lambda,\munu)=(\frac12,1)$, we have the estimate
\begin{multline*}
\iri{X^\perp\cdot\mathfrak M_1\,X}+\iri{Y^\perp\cdot\mathfrak M_2\,Y}\ge\frac12\iri{X^\perp\cdot\mathfrak M_0\,X}\hspace*{1cm}\\
\hspace*{1cm}+\frac14\(2\,\kappa+1-\,\sqrt{5\,\kappa^2-4\,\kappa+1}\)|Y|^2\,.
\end{multline*}
\end{lemma}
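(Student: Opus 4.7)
The plan is to derive the statement as an integrated version of two \emph{pointwise} (in $(x,v)$) spectral inequalities on the block matrices $\mathfrak M_1(\tfrac12,1)$ and $\mathfrak M_2(\tfrac12,1)$. After integration against $d\mu$, these yield respectively the two terms on the right hand side, so everything reduces to finite-dimensional linear algebra independent of $h$.

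The $X$-part turns out to be an identity. Substituting $\lambda=\tfrac12$ and $\munu=1$ in the definitions gives
\[
\mathfrak M_0(\tfrac12,1)=\tfrac12\begin{pmatrix}2&1\\1&2\end{pmatrix}\otimes\mathrm{Id}_{\R^d},\qquad\mathfrak M_1(\tfrac12,1)=\tfrac14\begin{pmatrix}2&1\\1&2\end{pmatrix}\otimes\mathrm{Id}_{\R^d},
\]
hence $\mathfrak M_1(\tfrac12,1)=\tfrac12\,\mathfrak M_0(\tfrac12,1)$, which after integration produces the $\tfrac12\iri{X^\perp\cdot\mathfrak M_0\,X}$ contribution. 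For the $Y$-part, since $\mathfrak M_2(\tfrac12,1)$ has tensor form $M\otimes\mathrm{Id}_{\R^d\times\R^d}$, the task reduces to showing that the scalar $4\times 4$ matrix
\[
M:=\begin{pmatrix}1&1/2&-\kappa/2&-\kappa/4\\1/2&1&-\kappa/4&-\kappa/2\\-\kappa/2&-\kappa/4&2\kappa&\kappa\\-\kappa/4&-\kappa/2&\kappa&2\kappa\end{pmatrix}
\]
satisfies $M\ge\lambda_1(\kappa)\,\mathrm{Id}$, with $\lambda_1(\kappa)=\tfrac14\bigl(2\kappa+1-\sqrt{5\kappa^2-4\kappa+1}\bigr)$.

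The key observation that makes this tractable is that $M$ commutes with the involution $P$ which simultaneously swaps indices $1\leftrightarrow 2$ and $3\leftrightarrow 4$. Consequently $\R^4$ decomposes orthogonally as $E_+\oplus E_-$ into $(\pm1)$-eigenspaces of $P$, spanned respectively by $\{(1,1,0,0)/\sqrt 2,(0,0,1,1)/\sqrt 2\}$ and $\{(1,-1,0,0)/\sqrt 2,(0,0,1,-1)/\sqrt 2\}$, and both subspaces are $M$-invariant. A direct computation in these bases gives the block diagonalisation
\[
M|_{E_+}=\tfrac32\begin{pmatrix}1&-\kappa/2\\-\kappa/2&2\kappa\end{pmatrix},\qquad M|_{E_-}=\tfrac12\begin{pmatrix}1&-\kappa/2\\-\kappa/2&2\kappa\end{pmatrix},
\]
so that the spectrum of $M$ is the union of the spectra of two proportional $2\times 2$ matrices.

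The eigenvalues of the common factor $\begin{pmatrix}1&-\kappa/2\\-\kappa/2&2\kappa\end{pmatrix}$ are $\tfrac12\bigl(1+2\kappa\pm\sqrt{5\kappa^2-4\kappa+1}\bigr)$; rescaling them by $\tfrac12$ and $\tfrac32$ recovers exactly the four quantities $\lambda_1,\lambda_2,\lambda_3,\lambda_4$ listed in the excerpt. Among them $\lambda_1$ is the smallest, the only nontrivial comparison being $\lambda_1\le\lambda_2$, which is equivalent to $(1+2\kappa)^2\ge 5\kappa^2-4\kappa+1$, i.e.\ $\kappa(8-\kappa)\ge 0$; this holds throughout $\kappa\in[0,8]$, which corresponds precisely to $p\in[1,2]$ via $\kappa=8(2-p)/p$, and the same discriminant estimate simultaneously ensures $\lambda_1(\kappa)\ge 0$. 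The only real obstacle in the whole argument is identifying the symmetry $P$ that block-diagonalises $M$; once this is spotted, the remainder consists of two routine $2\times 2$ characteristic-polynomial computations and one comparison of quadratics in $\kappa$.
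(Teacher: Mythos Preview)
Your proof is correct and follows essentially the same route as the paper: both arguments reduce the $X$-part to the identity $\mathfrak M_1(\tfrac12,1)=\tfrac12\,\mathfrak M_0(\tfrac12,1)$ and the $Y$-part to computing the spectrum of the $4\times4$ block $M$ of $\mathfrak M_2(\tfrac12,1)$. The paper simply lists the four eigenvalues $\lambda_1,\dots,\lambda_4$ and asserts that $\lambda_1$ is the smallest and nonnegative for $\kappa\in[0,8]$; your contribution is to explain \emph{how} these eigenvalues are found, via the involution $P$ that block-diagonalises $M$ into two proportional $2\times2$ matrices, and to make the comparison $\lambda_1\le\lambda_2$ explicit through the inequality $\kappa(8-\kappa)\ge0$. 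This is a helpful clarification rather than a different method.
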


\subsection{Proof of Proposition \texorpdfstring{\ref{Prop:AE}}{1.1}}

Assume that $h$ solves~\eqref{VFP3}. With $(\lambda,\munu)=(\frac12,1)$, we deduce from Lemma~\ref{Lem:VFPalg} that $\mathcal J[h]$ is defined by~\eqref{TwistedFisher}. Then it satisfies the differential inequality
\[
\frac d{dt}\mathcal J[h(t,\cdot,\cdot)]\le-\,\mathcal J[h(t,\cdot,\cdot)]\,,
\]
{}from which we deduce that
\[
\mathcal J[h(t,\cdot,\cdot)]\le\mathcal J[h(0,\cdot,\cdot)]\,e^{-t}\quad\forall\,t\ge0\,.
\]
Using~\eqref{Ineq:E-EP} with $d\gamma=\mu\,dx\,dv$, $\lambda=1$ and $\varphi=\varphi_p$ for any $p\in[1,2]$ (also see Remark~\ref{Rem:OptGaussian}), we obtain that
\[
\mathcal E[h(t,\cdot,\cdot)]\le\mathcal J[h_0]\,e^{-t}\quad\forall\,t\ge0
\]
if $h$ is the solution of~\eqref{VFP3} with initial datum $h_0$.

The optimality of the rate is established by considering an initial datum which is a decentred stationary solution. With the notations of Section~\ref{Sec:Intro}, let
\[
f_0(x,v)=f_\star(x-x_0,v-v_0)\quad\forall\,(x,v)\in\R^d\times\R^d
\]
for some $(x_0,v_0)\neq(0,0)$. The reader is invited to check that
\begin{multline}\label{Ex:VFP}
f(t,x,v)=f_\star\big(x-x_\star(t),v-v_\star(t)\big)\\
\quad\mbox{with}\quad
\left\{\begin{array}{l}
x_\star(t)=\(\cos\big(\frac{\sqrt3}2\,t\big)\,x_0+\frac2{\sqrt3}\,\sin\big(\frac{\sqrt3}2\,t\big)\(v_0+\frac{x_0}2\)\)e^{-\frac t2}\,,\\
v_\star(t)=\(-\frac{\sqrt3}2\,\sin\big(\frac{\sqrt3}2\,t\big)\(x_0+\frac{v_0}2\)+\cos\big(\frac{\sqrt3}2\,t\big)\,v_0\)e^{-\frac t2}\,,
\end{array}\right.
\end{multline}
solves~\eqref{VFP1}. Now let us compute the entropy as $t\to+\infty$: with $g=f/f_\star$ and $\varphi=\varphi_p$, we obtain that, as $t\to+\infty$,
\begin{multline*}
\mathcal E[g(t,\cdot,\cdot)]=\iint_{\R^d\times\R^d}\varphi_p(g)\,d\mu=\frac p2\iint_{\R^d\times\R^d}|g-1|^2\,d\mu\,(1+o(1))\\
=\frac p2\(|x_\star(t)|^2+|v_\star(t)|^2\)(1+o(1))=O\(e^{-t}\)\,.
\end{multline*}
This proves that the rate $e^{-t}$ of Proposition~\ref{Prop:AE} is optimal and completes the proof.~ \qed

\medskip Compared to the proof of Proposition~\ref{Prop:AE}, a refined estimate can be obtained by observing that, in the computation of $\frac d{dt}\iri{|\nabla_vh|^2}$ and $\frac d{dt}\iri{|\nabla_xh|^2}$, we have
\begin{eqnarray*}
&&\|\H{vv}\|^2-\kappa\,\H{vv}:\M{vv}+2\,\kappa\,\|\M{vv}\|^2\ge0\,,\\
&&\|\H{xv}\|^2-\kappa\,\H{xv}:\M{xv}+2\,\kappa\,\|\M{xv}\|^2\ge0\,,
\end{eqnarray*}
with $\kappa=8\,(2-p)/p$. Let us define
\begin{multline*}
\mathsf a:=e^t\iri{|\nabla_vh|^2}\,,\quad\mathsf b:=e^t\iri{\nabla_vh\cdot\nabla_xh}\,,\quad\mathsf c:=e^t\iri{|\nabla_xh|^2}\,,\\
\mbox{and}\quad\mathsf j:=\mathsf a+\mathsf b+\mathsf c\,.
\end{multline*}
We deduce from~\eqref{Est1},~\eqref{Est2} and~\eqref{Est3} that
\[
\frac{d\kern 1pt\mathsf a}{dt}\le\mathsf a-\,2\,(\mathsf j-\mathsf c)\,,\quad\frac{d\kern 1pt\mathsf c}{dt}\le2\,(\mathsf j-\mathsf a)-\,\mathsf c\quad\mbox{and}\quad\frac{d\kern 1pt\mathsf j}{dt}\le0
\]
while we know by definition of $\mathsf a$, $\mathsf b$ and $\mathsf c$ and by the Cauchy-Schwarz estimate that
\[
\mathsf a\ge0\,,\quad\mathsf c\ge0\quad\mbox{and}\quad\mathsf b^2\le\mathsf a\,\mathsf c\,.
\]
In terms of $\mathsf a$ and $\mathsf c$, the inequality $\mathsf b^2=(\mathsf a+\mathsf c-\mathsf j)^2\le\mathsf a\,\mathsf c$ means that the problem is constrained to the interior of an ellipse, and that $\mathsf a=0$ if and only if $\mathsf c=\mathsf j$: see Fig.~\ref{Fig:F2}. Finally, let us observe that we have the following property.
\begin{lemma}\label{Lem:VFPannulation} Assume that $p\in[1,2]$, $\potential(x)=|x|^2/2$ and let $h$ be a solution of~\eqref{VFP3} with initial datum $h_0\in\mathrm L^1\cap\mathrm L^p(\R^d,d\gamma)$. With the above notations, if for some $t_0>0$, $\mathsf a(t_0)=0$ and $\mathsf j(t_0)\neq0$, then for any $t>t_0$ with $t-t_0$ small enough, we have $\mathsf a(t)>0$.\end{lemma}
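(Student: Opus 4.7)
My plan is to expand $\mathsf a$ to second order near $t_0$ and show that the leading quadratic coefficient is strictly positive. Because $\mathsf a\ge0$ and $\mathsf a(t_0)=0$, one automatically gets $\mathsf a'(t_0)=0$, so the statement reduces to computing $\mathsf a''(t_0)$ and matching it to $2\,e^{t_0}\iri{|\nabla_xH|^2}$, where $H(x):=h(t_0,x,v)$ is to be shown to depend on $x$ alone.

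First I would extract the structural consequences of $\mathsf a(t_0)=0$. Since $d\mu$ has a strictly positive density on $\R^d\times\R^d$, the identity $\iri{|\nabla_vh(t_0)|^2}=0$ forces $\nabla_vh(t_0,x,v)=0$ almost everywhere, so $h(t_0,x,v)=H(x)$ is a function of $x$ only. By hypoelliptic regularization of~\eqref{VFP1} at $t_0>0$, whose Green function is an explicit Gaussian kernel (cf.~\cite{MR2042214}), $H$ is smooth with Gaussian decay. This gives for free $\mathsf b(t_0)=0$, hence $\mathsf j(t_0)=\mathsf c(t_0)=e^{t_0}\iri{|\nabla_xH|^2}$, and the assumption $\mathsf j(t_0)\neq0$ is equivalent to $\nabla_xH\not\equiv0$.

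Next I would evaluate~\eqref{VFP3} pointwise at $t_0$. All $v$-derivatives of $h(t_0)=H$ vanish, so the Ornstein-Uhlenbeck piece $\mathsf L\,h$ and the nonlinear correction $\tfrac{2-p}{p}\,|\nabla_vh|^2/h$ both vanish, whereas $\mathsf Th(t_0,x,v)=v\cdot\nabla_xH(x)$. Therefore
\[
\partial_th(t_0,x,v)=-\,v\cdot\nabla_xH(x),\qquad\nabla_v(\partial_th)(t_0,x,v)=-\,\nabla_xH(x).
\]
Using the regularity afforded by the Gaussian kernel, the map $t\mapsto\nabla_vh(t,\cdot,\cdot)$ is $C^1$ from a neighbourhood of $t_0$ into $L^2(d\mu)$, so
\[
\nabla_vh(t,x,v)=-(t-t_0)\,\nabla_xH(x)+o(t-t_0)\quad\mbox{in }L^2(d\mu),
\]
and hence
\[
\mathsf a(t)=e^t\,(t-t_0)^2\iri{|\nabla_xH|^2}+o\bigl((t-t_0)^2\bigr)
\]
is strictly positive for every $t>t_0$ with $t-t_0$ sufficiently small, since $\iri{|\nabla_xH|^2}>0$.

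The only genuinely delicate point is the $C^1$-in-$L^2(d\mu)$ regularity of $t\mapsto\nabla_vh(t)$ at $t_0$. I would derive it from the explicit Gaussian representation of the fundamental solution used in~\cite{MR2042214}. An alternative that avoids any abstract regularity statement is to compute directly $I''(t_0)$ where $I(t):=\iri{|\nabla_vh(t)|^2}$: identity~\eqref{Est1} shows that $I'(t_0)=0$ because every term involves either $\nabla_vh$, $\H{vv}$ or $\M{vv}$, all of which vanish at $t_0$; differentiating once more and keeping only the unique term that does not carry a factor $\nabla_vh$ yields $I''(t_0)=2\iri{|\nabla_v\partial_th|^2_{|t_0}}=2\iri{|\nabla_xH|^2}>0$, so that $I(t)=(t-t_0)^2\iri{|\nabla_xH|^2}+o((t-t_0)^2)$ and $\mathsf a(t)>0$ for $t>t_0$ close enough to $t_0$.
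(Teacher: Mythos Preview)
Your proof is correct and follows a genuinely different route from the paper's. The paper argues geometrically: it invokes the constraint $\mathsf b^2=(\mathsf a+\mathsf c-\mathsf j)^2\le\mathsf a\,\mathsf c$, which confines $(\mathsf a,\mathsf c)$ to the ellipse of Fig.~\ref{Fig:F2}, and combines this with sign information on the time derivatives of $\mathsf j$ and $\mathsf c$ at $t_0$ to force the trajectory off the boundary point $(0,\mathsf j(t_0))$ into the interior, hence $\mathsf a>0$. You instead perform a direct second-order Taylor expansion of $\mathsf a$: from $\partial_th(t_0)=-\,v\cdot\nabla_xH$ you read off $\nabla_v\partial_th(t_0)=-\nabla_xH$, whence $\mathsf a(t)\sim e^{t_0}(t-t_0)^2\iri{|\nabla_xH|^2}>0$. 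Your alternative computation of $I''(t_0)=2\,\|\nabla_v\partial_th(t_0)\|_{L^2(d\mu)}^2$ (the remaining term $2\,\langle\nabla_vh,\nabla_v\partial_t^2h\rangle$ vanishing because $\nabla_vh(t_0)=0$) reaches the same conclusion without the abstract $C^1$-in-$L^2$ step. Both arguments share the opening reductions (smoothness via the Gaussian kernel; $h(t_0)=H(x)$; $\mathsf c(t_0)=\mathsf j(t_0)>0$). Your approach is more quantitative, pinning down the exact quadratic leading term of $\mathsf a$, and is more transparent than the paper's rather compressed argument, whose stated signs for $d\mathsf j/dt$ and $d\mathsf c/dt$ at $t_0$ require some reinterpretation; on the other hand, the paper's ellipse picture conveys the oscillatory phase-space dynamics that a purely local expansion does not.
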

\begin{proof} From the equivalence of~\eqref{VFP1} and~\eqref{VFP3}, we know that $h$ is smooth because of the expression of Green's function. By definition of $\mathsf b$ and $\mathsf j$, we have that $\mathsf b(t_0)=0$ and $\mathsf c(t_0)=\mathsf j(t_0)>0$. Since $\mathsf a(t_0)=0$ means that $h$ does not depend on $v$, we know that $\frac{d\kern 1pt\mathsf j}{dt}(t_0)=\mathsf j(t_0)>0$, hence proving that $\mathsf a(t)>0$ for $t-t_0>0$, small, because of the condition $\mathsf b^2\le\mathsf a\,\mathsf c$ and $\frac{d\kern 1pt\mathsf c}{dt}\le0$, which means that $t\mapsto(\mathsf a(t),\mathsf c(t))$ is constrained to the interior of the ellipse of Fig.~\ref{Fig:F2}.\end{proof}

\begin{figure}[ht]
\begin{center}
\includegraphics{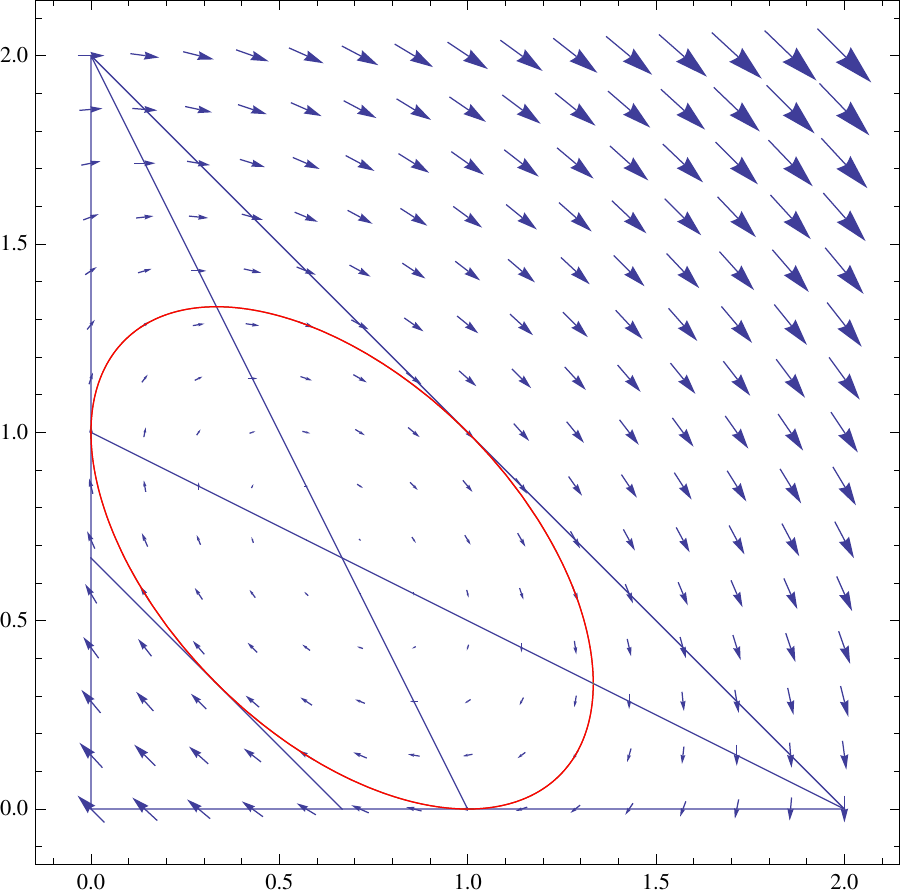}
\caption{\label{Fig:F2} Plot of the vector field associated with the ODEs $\frac{d\kern 1pt\mathsf a}{dt}=\mathsf a-\,2\,(\mathsf j-\mathsf c)$ and $\frac{d\kern 1pt\mathsf c}{dt}=2\,(\mathsf j-\mathsf a)-\,\mathsf c$. The coordinates are $\mathsf a/\mathsf j$ (horizontal axis) and $\mathsf c/\mathsf j$ (vertical axis). The two straight lines intersecting at the center of the ellipse are defined by $2\,(\mathsf j-\mathsf a)-\mathsf c=0$ and $\mathsf a-2\,\mathsf j+2\,\mathsf c=0$.}
\end{center}
\end{figure}

\subsection{Proof of Theorem \texorpdfstring{\ref{Thm:Main}}{1.1}}

Let us consider the \emph{Fisher information} functional as defined in~\eqref{TwistedFisher2}. A computation shows that
\[
-\frac12\,\frac d{dt}\mathcal J_{\lambda(t)}[h(t,\cdot)]=X^\perp\cdot\mathfrak M_1\,X-\frac12\,\lambda'(t)\,X^\perp\cdot\(
\begin{array}{cc}
0&1\\
1&0\\
\end{array}
\)\,X+Y^\perp\cdot\mathfrak M_2\,Y
\]
where $\mathfrak M_0$, $\mathfrak M_1$ and $\mathfrak M_2$ are defined as before, with $\munu=1$, and $X=\(\nabla_vh,\nabla_xh\)$, $Y=\(\H{vv},\H{xv},\M{vv},\M{xv}\)$. We take of course $\lambda=\lambda(t)$. We know that
\[
Y^\perp\cdot\mathfrak M_2\,Y\ge\lambda_1(p,\lambda)\,|Y|^2
\]
for some $\lambda_1(p,\lambda)$ such that $\lambda_1(p,1/2)=\frac14\(2\,\kappa+1-\,\sqrt{5\,\kappa^2-4\,\kappa+1}\)>0$ if $p\in(1,2)$, and $\kappa=8\,(2-p)/p$. For any $p\in(1,2)$, by continuity we know that $\lambda_1(p,\lambda)>0$ if $\lambda-1/2>0$ is taken small enough. From $|Y|^2\ge\|\M{vv}\|^2$ and, by Cauchy-Schwarz,
\[
\(\iri{|\nabla_vh|^2}\)^2\le\iri{h^2}\iri{\|\M{vv}\|^2}\le c_0\,\iri{\|\M{vv}\|^2}
\]
where $c_0:=1+(p-1)\,\mathcal E[h_0^{2/p}]$, we obtain
\[
-\frac12\,\frac d{dt}\mathcal J_{\lambda(t)}[h(t,\cdot)]\ge X^\perp\cdot\mathfrak M_1\,X+\frac12\,\lambda'(t)\,X^\perp\cdot\mathfrak M_0\,X+\varepsilon\,X^\perp\cdot\mathfrak M_3\,X
\]
with $\varepsilon=\lambda_1(p,\lambda)\,c_0^{-1}\iri{|\nabla_vh|^2}$ and
$\displaystyle \mathfrak M_3=\(
\begin{array}{cc}
1&0\\
0&0\\
\end{array}
\)\otimes\,\mathrm{Id}_{\R^d}$. We recall that $\mathsf a$ is defined by $\mathsf a=e^t\,\iri{|\nabla_vh|^2}$ is positive except for isolated values of $t>0$. Our goal is to find $\lambda(t)$ and $\rho(t)>1/2$ such that
\[
X^\perp\cdot\mathfrak M_1\,X-\frac12\,\lambda'(t)\,X^\perp\cdot\(
\begin{array}{cc}
0&1\\
1&0\\
\end{array}
\)\,X+\varepsilon\,X^\perp\cdot\mathfrak M_3\,X\ge\rho(t)\,X^\perp\cdot\mathfrak M_0\,X
\]
for any $X\in\R^{2d}$.

To establish the existence of $\rho>1/2$ a.e., we proceed in several steps.\\
$\bullet$ If $\mathsf a\ge\mathsf a_\star$ for some constant $\mathsf a_\star>0$, then we define $\varepsilon(t)=\nu\,e^{-t}$ with $\nu=\lambda_1(p,\lambda)\,c_0^{-1}\,\mathsf a_\star$, $\lambda(t)=(1+\varepsilon(t))/2$ and $\rho(t)=\frac12\,(1+\nu/(\nu+3\,e^t))$. The same estimate holds on any subinterval of $\R^+$.\\
$\bullet$ If $\mathsf a(t_0)=0$ for some $t_0\ge0$, then in a neighborhood of $(t_0)_+$, we can solve
\[
\frac{d\lambda}{dt}=\nu\,\varepsilon(t)\,,\quad\lambda(t_0)=\frac12\,.
\]
An eigenvalue computation shows that
\[
\mathfrak M_1+\frac12\,\nu\,\varepsilon\,\mathfrak M_0+\varepsilon\,\mathfrak M_3\ge\zeta(\varepsilon,\lambda,\nu)\,\mathfrak M_0
\]
with
\[
\zeta\(0,\tfrac12,\nu\)=\tfrac12\,,\quad\frac{\partial\zeta}{\partial\varepsilon}\(0,\tfrac12,\nu\)=\frac{2+\sqrt3-2\,\nu}3\,,\quad\frac{\partial\zeta}{\partial\lambda}\(0,\tfrac12,\nu\)=-\frac2{\sqrt3}\,.
\]
We choose an arbitrary $\nu\in(0,1+\sqrt3/2)$. Since $0<\lambda(t)-1/2=o(\varepsilon(t))$ for $t-t_0>0$, small enough, this guarantees that $\rho(t)=\zeta\(\varepsilon(t),\lambda(t),\nu\)$ satisfies $\rho(t)>1/2$ on a neighborhood of $(t_0)_+$.\\
$\bullet$ If $\mathsf\zeta(t_0)=0$ for some $t_0>0$, then in a neighborhood of $(t_0)_-$, we proceed as above with some $\nu<0$.\\
$\bullet$ If $(t_n)_{n\in\N}$ is the increasing sequence of points such that $\mathsf a(t_n)=0$ and if $\mathsf a(t)>0$ for any $t\in\R^+$ such that $t\neq t_n$ for any $n\in\N$, we can choose a constant $\mathsf a_\star$, small enough, on any interval $(t_n,t_{n+1})$ and glue the above solutions to obtain a function $\rho(t)>1/2$ on $(0,t_0)$ and $\cup_{n\in\N}(t_n,t_{n+1})$.
It is an open question to decide if there is an increasing sequence, finite or infinite, of times $t_n$ such that $\mathsf a(t_n)=0$, or if $\mathsf a(t)$ is positive for any $t>0$. We can of course impose that $\mathsf a(t_0)=0$ at $t_0=0$ by taking an initial datum $h_0$ which does not depend on $v$. If such a sequence $(t_n)_{n\in\N}$ exists, then we know that $\lambda(t_n)=1/2$ so that we have the remarkable decay estimate
\[
\mathcal J_{\tfrac12}[h(t_{n+1},\cdot)]\le\mathcal J_{\tfrac12}[h(t_n,\cdot)]\,e^{-\,2\int_{t_n}^{t_{n+1}}\rho(s)\,ds}<\mathcal J_{\tfrac12}[h(t_n,\cdot)]\,e^{-(t_{n+1}-t_n)}
\]
for any $p\in(1,2)$. As far as $\mathsf a$ is concerned, we expect that it has some oscillatory behaviour as indicated by the vector field in Fig.~\ref{Fig:F2}, but since terms involving $Y$ are neglected, this is so far formal. In any case, we can choose $\lambda(t)$ such that $\lim_{t\to+\infty}\lambda(t)=1/2$. This concludes the proof of Theorem~\ref{Thm:Main}.\nc\hfill\ \qed

\subsection{Concluding remarks}\label{Sec:conclusion}

Even if the global rate cannot be improved because it is determined by the large time asymptotics, at any finite time the instantaneous rate of decay is strictly higher in the case of the diffusions studied in Sections~\ref{Sec:EEP}-\ref{Sec:iEEP}, or at least higher at almost any time in the case of the kinetic equation, according to Theorem~\ref{Thm:Main}.

As $t\to+\infty$, Theorem~\ref{Thm:Main} provides us with an improved estimate of the leading order term. The exponential decay rate cannot be improved as shown by~\eqref{Ex:VFP}, but we prove that there is a constant less than $1$ to be taken into account. This observation is reminiscent of what happens for nonlinear diffusions of porous medium or fast diffusion type, which goes as follows. When looking at the relative entropy with respect to the \emph{best matching} (in the sense of relative entropy) profiles, it turns out that there is a delay $\tau$ compared to the relative entropy with respect to a fixed Barenblatt profile. As a result, we obtain a multiplicative factor $e^{-\tau}$ corresponding to an improved estimate in an asymptotic expansion as $t\to+\infty$~\cite{1501}. We have a similar property when we study the large time behavior of the solutions of~\eqref{VFP2} using a $\varphi_p$-entropy for any given $p\in(1,2)$.

The key estimate of Theorem~\ref{Thm:Main} asserts that
\[
\frac d{dt}\mathcal J_{\lambda(t)}[h(t,\cdot)]\le-2\,\rho(t)\,\mathcal J_{\lambda(t)}[h(t,\cdot)]\le-\mathcal J_{\lambda(t)}[h(t,\cdot)]
\]
where the last inequality is strict for almost any value of $t\ge0$ (unless $h$ is a stationary solution). Now, let us consider the large time asymptotics and define
\[
\tau:=\lim_{t\to+\infty}\left(2\int_0^t\rho(s)\,ds-t\right)\,.
\]
We cannot expect that $\tau=+\infty$ for any initial datum but at least show that $\tau$ is positive (unless $h$ is a stationary solution), so that for large values of $t$ we have
\be{FinalAsymptotics}
\mathcal J_{1/2}[h(t,\cdot)]\lesssim e^{-\tau}\,\mathcal J_{1/2}[h_0]\,e^{-t}\,.
\ee
For instance, in case of~\eqref{Ex:VFP}, one can prove that $\rho(t)-1/2$ is of the order of $e^{-t}$ and $\tau$ is finite. With $e^{-\tau}<1$,~\eqref{FinalAsymptotics} is anyway a strict improvement of the usual estimate as $t\to+\infty$.

The improvement of Theorem~\ref{Thm:Main} is obtained only for \emph{almost any} time: according to Lemma~\ref{Lem:VFPannulation}, the optimal decay rate could eventually be realized at an increasing sequence of times $t_n\nearrow+\,\infty$, but the solution will then deviate and temporarily regain a faster decay rate. Qualitatively, this comes from the oscillations in the phase space corresponding to the ODE associated with the vector field shown in Fig.~\ref{Fig:F2}. Such a pattern is consistent with what is known of the rates measured by \emph{hypocoercive methods} in kinetic equations.

\section*{Acknowledgments}
This work has been partially supported by the Projects Kibord and EFI (J.D.) of the French National Research Agency (ANR). The first author (J.D.) thanks J.-P.~Bartier and B.~Nazaret for fruitful discussions on, respectively, the tensorization properties of the $\varphi$-entropies and various computations based on the Bakry-Emery method, which took place at the occasion of a course taught by C.~Mouhot on hypocoercivity methods. Both authors thank an anonymous referee for interesting comments and suggestion which lead to improvements of this article.\\
\noindent{\scriptsize\copyright\,2018 by the authors. This paper may be reproduced, in its entirety, for non-commercial purposes.}

\bibliographystyle{siam}\small
\bibliography{References}


\end{document}